\newtheorem{assum}{Assumption}
\newcounter{hl_thm}
\newtheorem{theo}[hl_thm]{Theorem}
\newlist{hip}{enumerate}{1}
\setlist[hip]{label=\textbf{(H.\arabic*)},labelindent=\parindent, leftmargin=*,nolistsep}
\setlist[hip]{label={(A.\arabic*)},labelindent=\parindent, leftmargin=*,nolistsep}
\title{Gradient- and Newton-Based Unit Vector Extremum Seeking Control}
\author[1]{Roberto Luo}
\author[1]{Victor Hugo Pereira Rodrigues}
\author[1]{\\Tiago Roux Oliveira}
\author[2]{Miroslav Krstic}
\affil[1]{Department of Electronics and Telecommunication Engineering (DETEL), State University of Rio de Janeiro (UERJ), Rio de Janeiro -- RJ, Brazil.}
\affil[2]{University of California at San Diego (UCSD), San Diego -- CA, USA.}
\runningauthor{Roberto Luo et. al.}
\begin{document}

\begin{frontmatter}
\maketitle

\begin{abstract}
This paper presents novel methods for achieving stable and efficient convergence in multivariable extremum seeking control (ESC) using sliding mode techniques. Drawing inspiration from both classical sliding mode control and more recent developments in finite-time and fixed-time control, we propose a new framework that integrates these concepts into Gradient- and Newton-based ESC schemes based on sinusoidal perturbation signals. The key innovation lies in the use of discontinuous "relay-type" control components, replacing traditional proportional feedback to estimate the gradient of unknown quadratic nonlinear performance maps with Unit Vector Control (UVC). This represents the first attempt to address real-time, model-free optimization using sliding modes within the classical extremum seeking paradigm. In the Gradient-based approach, the convergence rate is influenced by the unknown Hessian of the objective function. In contrast, the Newton-based method overcomes this limitation by employing a dynamic estimator for the inverse of the Hessian, implemented via a Riccati equation filter. We establish finite-time convergence of the closed-loop average system to the extremum point for both methods by leveraging Lyapunov-based analysis and averaging theory tailored to systems with discontinuous right-hand sides. Numerical simulations validate the proposed method, illustrating significantly faster convergence and improved robustness compared to conventional ESC strategies, which typically guarantee only exponential stability. The results also demonstrate that the Gradient-based method exhibits slower convergence and higher transients since the gradient trajectory follows the curved and steepest-descent path, whereas the Newton-based method achieves faster convergence and improved overall performance going straightly to the extremum.
\keywords{Newton-based extremum seeking, Gradient-based extremum seeking, unit vector control, sliding mode control}
\end{abstract}
\end{frontmatter}

\section{Introduction}
 Extremum seeking control (ESC) is a real-time adaptive method designed to maintain the output of an objective function in the vicinity of its optimal value, whether to maximized or minimized, without requiring the knowledge of the system dynamics. The classical extremum seeking works by introducing a small periodic sinusoidal signal into the control input and output to estimate the gradient direction. As the estimate gradient approaches to zero, the output and input of the cost function converges to the vicinity of its extremum value \cite{KW:2000}. \\
\indent This control method has been employed in theoretical and application studies, such as in optimization field \cite{AK:2003, zhang2012extremum} for a wide class of problems: \textcolor{black}{in the presence of delays or dynamics modeled by Partial Differential Equations (PDEs) \cite{TRO_book2022}, for uncertain nonlinear systems subject to constraints \cite{GMD:2014} or underactuated systems with symmetry \cite{SK:2024}} as well as \textcolor{black}{for an iterative auto-tuning of the feedback gains for a class of nonlinear systems \cite{M:2016,TRO:2019,TRO:2020}}; in non-cooperative games to seek the Nash equilibrium \cite{TRoux:2021dec,ORKT:2021a}, and with event-triggered control \cite{ROHDK:2025,ROKB:2024a} in order to reduce control effort. All these works cited are based on a gradient method, the convergence rate speed is proportional to the unknown Hessian (second derivative) of the nonlinear map.\\ 
\indent Reference \cite{MMB:2010} has introduced a Newton-like method to estimate the second derivative of the map, to increase the convergence rate speed. Later, \cite{GKN:2012} proposed a dynamic estimator of the inverse of the Hessian in the form of a Riccati differential equation to eliminate the Hessian dependence and make it user-assignable. Therefore, the convergence rate speed of the system and the estimator of the Hessian inverse are independent of the unknown Hessian of the objective function. Due to this control strategy, it has been explored in extremum seeking field, for examples, in power optimization for photovoltaic to improve the transient performance \cite{GKS:2014}, the design of multivariable extremum seeking for static maps with different time delays \cite{OKT:2017}, to ensure uniform convergence rate of the system’s parameters for a multivariable static map \cite{GO:2024}, by using the static event-triggered control with a Newton-based extremum seeking \cite{ROKT:2025}, the controller updated less than the gradient-based method and for fast and accurate impedance-matching control \cite{GUO:2025}.\\
\indent In the literature on ESC based on sliding modes, a notable contribution is the work by Özgüner and coauthors \cite{Ozguner_SMC}, which introduced a periodic switching function to enable a state-feedback control approach. Building on this, Oliveira et al. \cite{Oliveira_SMC1,Oliveira_SMC2} extended the methodology to rely solely on output feedback. The same research group also proposed an alternative sliding mode-based ES controller employing monitoring functions \cite{Aminde_SMC1,Aminde_SMC2}, \textcolor{black}{including experimental validation by means of the source seeking application \cite{NOH:2024a}}. However, none of these ESC strategies adopt the original and simplest structure characterized by sinusoidal periodic perturbations, as introduced in \cite{KW:2000}. In our previous work \cite{ROKB:2024b}, we presented the first sliding-mode Nash equilibrium seeking (NES) strategy based on relay extremum seeking with sinusoidal perturbations. The approach featured two key characteristics in its distributed control architecture: (1) each player independently generated sinusoidal perturbations to estimate its pseudogradient without inter-agent communication, and (2) a discontinuous sliding-mode control law ensured finite-time convergence to the Nash equilibrium. The method combined extremum seeking principles with switching control actions to achieve model-free equilibrium seeking in quadratic duopoly games.\\
\indent Recently, interesting developments have emerged in the context of fixed-time strategies for ESC \cite{PK:2021,PKB:2023}---which are time-invariant and non-smooth—as well as prescribed-time schemes for extremum seeking \cite{YK:2022} and source seeking \cite{TK:2023}, which are time-varying and smooth. In particular, \cite{PK:2021,PKB:2023} introduce control laws reminiscent of those used in higher-order sliding mode control (SMC). A well-known example of second-order SMC is the super-twisting algorithm (STA) \cite{MO:2012}. More broadly, an entire family of finite-time (or fixed-time) SMC strategies exists \cite{M:2011}, offering potential for application in extremum seeking. Despite this, the idea of applying variable structure or sliding mode ESC strategies with finite (though not fixed) convergence time has yet to be fully explored within the SMC community. We believe this direction holds significant promise and, as a starting point, we focus on the simplest and most widely known case: first-order SMC (FOSMC), implemented via a relay function. Although FOSMC is discontinuous, it guarantees finite-time convergence. \\
\indent In this context, Unit-Vector Control (UVC) is a control strategy that normalizes the control input to unit magnitude, enforcing finite-time convergence of the system’s state trajectory through the application of a discontinuous control law \cite{B:1993, OFH:2023}. UVC it is a branch of Sliding Mode Control (SMC) tailored for multivariable systems, offering a distinct advantage in the design of robust controllers for systems subject to uncertainties. To bridge the existing gaps, our paper extends the classical multivaribale Gradient- and Newton-based ESC algorithms \cite{GKN:2012} by incorporating UVC in place of the usual proportional control laws with respect to the gradient estimates. We provide theoretical analysis, leveraging time-scaling techniques, constructing a Lyapunov function, and employing averaging methods to ensure closed-loop stability. Numerical simulations demonstrate the efficacy of our approaches, highlighting its superiority over classical ES with linear proportional control laws \cite{KW:2000,GKN:2012} since now the average closed-loop system is finite-time stable rather than only exponentially
stable. \\
%
\indent In the particular design of a Newton-based extremum seeking control strategy combined with Unit Vector Control (UVC), we enhance the convergence rate by eliminating the dependence on the unknown Hessian. A numerical example is provided to validate the theoretical findings, offering a comparative analysis between the Gradient- and Newton-based methods with finite-time convergence, both employing UVC.\\

\section{Gradient-based Unit Vector Extremum Seeking Control}

We define the following  nonlinear static map
\begin{align}
y(t)&=Q(\theta(t))\,, \label{eq:y_v1} \\
&= Q^{\ast}+\frac{1}{2}(\theta(t)-\theta^{\ast})^{\top}H^{\ast}(\theta(t)-\theta^{\ast})\,, \label{eq:y_v2} \\
&=Q^{\ast}+\frac{1}{2}\sum_{j=1}^{N}\sum_{k=1}^{N}H_{jk}^{\ast}(\theta_{j}(t)-\theta^{\ast}_{j})(\theta_{k}(t)-\theta^{\ast}_{k})\,, \label{eq:y_v3}
\end{align}
where $Q^{\ast}\in  {R}$ is the extremum point, $H^{\ast}=H^{\ast \top} \in  {R}^{n \times n}$ is the Hessian matrix, $\theta^{\ast} \in  {R}^{n}$ is the optimizer vector, $\theta(t)\in  {R}^{n}$ is the input map. The cost function $Q(\theta(t))$ in (\ref{eq:y_v1}) is not explicitly known, however, we have access to measurements of $y(t) \in  {R}$ and can adjust $\theta(t)$. The Gradient-based Extremum Seeking with unit vector control approach for this multivariable static map is illustrated in Fig.~\ref{fig:BD_ESC_UVC}. In this scheme, the feedback gain is
\begin{figure}[!ht]
\centering
\includegraphics[width=6.cm]{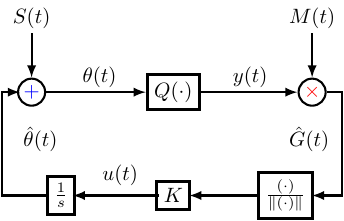}
\caption{Block diagram of extremum seeking with unit vector control.}
\label{fig:BD_ESC_UVC}
\end{figure}
\begin{align}
K=\text{diag}\left\{K_{1}\,,K_{2}\,,\ldots\,,K_{n}\right\}\,, \label{eq:K}
\end{align}
and the dither signals are defined as (see \cite{GKN:2012,K:2014})
\begin{align}
S(t)&= \left[a_{1}\sin\left(\omega_1 t\right),\ldots,a_{i}\sin\left(\omega_i t\right),\ldots,a_{n}\sin\left(\omega_n t\right)\right]^{\top}\!\!, \label{eq:S_v1} \\
M(t)&=2\left[\frac{\sin\left(\omega_1 t\right)}{a_{1}},\ldots,\frac{\sin\left(\omega_i t\right)}{a_{i}},\ldots,\frac{\sin\left(\omega_n t\right)}{a_{n}}\right]^{\top}, \label{eq:M_v1}
\end{align}
of nonzero amplitudes $a_{i}$. Finally, the probing frequencies $\omega_{i}$'s can be selected as
\begin{align}
\omega_{i}=\omega_{i}'\omega \,, \quad i \in \left\{1,\ldots\,,n\right\}\,, \label{eq:omegai_event}
\end{align}
where $\omega$ is a positive constant and $\omega_{i}'$ is a rational number.
\begin{assum}
\label{assumption_w}
The probing frequencies satisfy
\begin{align}
\omega'_{i} 	\notin \left\{\omega'_{j}\,,~\frac{1}{2}(\omega'_{j}+\omega'_{k})\,,~\omega'_{j}+2\omega'_{k}\,,~\omega'_{k}\pm\omega'_{l}\right\}\,, \label{eq:omega_iNotIn}
\end{align}
for all $i$, $j$, $k$ and $l$.
\end{assum} 
The block diagram in Fig.~\ref{fig:BD_ESC_UVC} highlights the main components of the ES architecture: the gradient estimation mechanism driven by periodic perturbations and demodulation technique and the control block adjusts the input vector using the estimated gradient through a normalization to unit magnitude.
Moreover, if Assumption~\ref{assumption_w} holds, an accurate estimate of the $i$-th component of the unknown gradient vector is given by $\hat{G}(t) = M(t)y(t) \in  {R}^{n}$ whose $i$-th component can be expressed as
\begin{align}
\hat{G}_{i}(t)&=\frac{2}{a_{i}}\sin(\omega_{i}t)y(t)\,. \label{eq:Gi}
\end{align}
On the other hand, the output of the integrator, the vector $\hat{\theta}(t)\in  {R}^{n}$, give us an estimate of $\theta^{*} \in  {R}^{n}$ such that the \textit{estimation error} is defined by 
\begin{align}
\tilde{\theta}(t)&=\hat{\theta}(t)-\theta^{*}\,.\label{eq:tildeThetai_v1}
\end{align}
 The input to the nonlinear map \( Q(\theta) \) in (\ref{eq:y_v1}), as described in Fig.~\ref{fig:BD_ESC_UVC}, is given by \( \theta(t) = \hat{\theta}(t) + S(t) \). For analysis purposes, and by using (\ref{eq:S_v1}) and (\ref{eq:tildeThetai_v1}), the \( i \)-th component of the input vector \( \theta(t) \) can be expressed in terms of the \( i \)-th component of the unknown estimation error vector \( \tilde{\theta}(t) \), the \( i \)-th component of the unknown optimal parameter vector \( \theta^{\ast} \), and the \( i \)-th component of the known dither signal \( S(t) \), as follows
\begin{align}
\theta_{i}(t)&=\tilde{\theta}_{i}(t)+a_{i}\sin\left(\omega_i t\right)+\theta_{i}^{*}\,.\label{eq:thetai_v1}
\end{align}
Therefore, by substituing (\ref{eq:y_v3}),  (\ref{eq:M_v1}) and (\ref{eq:tildeThetai_v1}) into (\ref{eq:Gi}), the $i$-th component of the estimate of the unknown gradient vector can be expressed as
\begin{align}
&\hat{G}_{i}(t)=\frac{1}{a_{i}}\sin(\omega_{i}t)\tilde{\theta}^{\top}(t)H^{\ast}\tilde{\theta}(t)+\sum_{j=1}^{n}\mbox{\calligra H}_{~~ij}^{~~~\ast}(t)\tilde{\theta}_{j}(t) +\Delta_{i}(t) \,, \label{eq:hatG_20250522_1} \\
&\mbox{\calligra H}_{~~ij}^{~~~\ast}(t):=H_{ij}^{\ast}+\Delta\mbox{\calligra H}_{~~ij}^{~~~\ast}(t)\,, \label{eq:calligraH_ij_20250522_1} \\
&\Delta\mbox{\calligra H}_{~~ij}^{~~~\ast}(t):=-H_{ij}^{\ast}\cos(2\omega_{i}t)+\sum_{\substack{k=1 \\ k\neq i}}^{n}H_{kj}^{\ast}\frac{a_{k}}{a_{i}}\cos((\omega_{i}-\omega_{k})t) -\sum_{\substack{k=1 \\ k\neq i}}^{n}H_{kj}^{\ast}\frac{a_{k}}{a_{i}}\cos((\omega_{i}+\omega_{k})t)\,,\label{eq:DeltaCalligraH_ij_20250522_1}\\
&\Delta_{i}(t):=\frac{2Q^{\ast}}{a_{i}}\sin(\omega_{i}t) + \frac{1}{4}\sum_{j=1}^{n}\sum_{k=1}^{n}H_{jk}^{\ast}\frac{a_{j}a_{k}}{a_{i}}\sin((\omega_{i}+\omega_{j}-\omega_{k})t)-\frac{1}{4}\sum_{j=1}^{n}\sum_{k=1}^{n}H_{jk}^{\ast}\frac{a_{j}a_{k}}{a_{i}}\sin((\omega_{i}-\omega_{j}+\omega_{k})t) \nonumber \\
&\qquad~~~~~~-\frac{1}{4}\sum_{j=1}^{n}\sum_{k=1}^{n}H_{jk}^{\ast}\frac{a_{j}a_{k}}{a_{i}}\sin((\omega_{i}+\omega_{j}+\omega_{k})t) -\frac{1}{4}\sum_{j=1}^{n}\sum_{k=1}^{n}H_{jk}^{\ast}\frac{a_{j}a_{k}}{a_{i}}\sin((\omega_{i}-\omega_{j}-\omega_{k})t)\,. \label{eq:Delta_i_20250222_1}
\end{align}
Thus, by using (\ref{eq:calligraH_ij_20250522_1})--(\ref{eq:Delta_i_20250222_1}), it is define the time-varying matrices $\mbox{\calligra H}^{~~~\ast}(t) \in  {R}^{n \times n}$ and $\Delta \! \mbox{\calligra H}^{~~~\ast}(t) \in  {R}^{n \times n}$, and the time-varying vector $\Delta(t) \in  {R}^{n }$,  
\begin{align}
\mbox{\calligra H}^{~~~\ast}(t) &:= H^{\ast}+\Delta \! \mbox{\calligra H}^{~~~\ast}(t)\,, \label{eq:calligraH}\\
\Delta \! \mbox{\calligra H}^{~~~\ast}(t)&:= \begin{bmatrix}
													\Delta \! \mbox{\calligra H}^{~~~\ast}_{~~~11}(t) & \Delta \! \mbox{\calligra H}^{~~~\ast}_{~~~12}(t) & \ldots & \Delta \! \mbox{\calligra H}^{~~~\ast}_{~~~1n}(t) \\
													\Delta \! \mbox{\calligra H}^{~~~\ast}_{~~~21}(t) & \Delta \! \mbox{\calligra H}^{~~~\ast}_{~~~22}(t) & \ldots & \Delta \! \mbox{\calligra H}^{~~~\ast}_{~~~2n}(t) \\
													\vdots                         & \vdots                         &        & \vdots                         \\
													\Delta \! \mbox{\calligra H}^{~~~\ast}_{~~~n1}(t) & \Delta \! \mbox{\calligra H}^{~~~\ast}_{~~~n2}(t) & \ldots & \Delta \! \mbox{\calligra H}^{~~~\ast}_{~~~nn}(t) \\
												 \end{bmatrix} \,, \label{eq:DeltaCalligraH} \\
						 \Delta(t) &:= \begin{bmatrix}
													\Delta_{1}(t) \,, 
													\Delta_{2}(t) \,,
													\ldots\,,
													\Delta_{n}(t)
												 \end{bmatrix}^{\top} \,. \label{eq:Delta}
\end{align}
Then, by using (\ref{eq:M_v1}) and (\ref{eq:calligraH})--(\ref{eq:Delta}), we can express (\ref{eq:hatG_20250522_1}) in the compact form
\begin{align}
\hat{G}(t)& {=}M(t)\frac{1}{2}\tilde{\theta}^{\top}(t)H^{\ast}\tilde{\theta}(t) {+}\left(H^{\ast} {+}\Delta \! \mbox{\calligra H}^{~~~\ast}\!\!(t)\right)\tilde{\theta}(t) {+}\Delta(t), \label{eq:hatG_20240302_2}
\end{align}
where $\Delta \! \mbox{\calligra H}~(t)$, defined in (\ref{eq:DeltaCalligraH}), and $\Delta(t)$, given in (\ref{eq:Delta}) are time-varying matrix and vector, respectively, both with zero mean.
Since the term $\tilde{\theta}^{\top}(t)H^{\ast}\tilde{\theta}(t)$ is quadratic in $\tilde{\theta}(t)$ and, therefore, may be neglected in a local analysis \cite{AK:2003},  the gradient estimate (\ref{eq:hatG_20240302_2}) ca be rewritten as 
\begin{align}
\hat{G}(t)&=\left(H^{\ast}+\Delta \! \mbox{\calligra H}^{~~~\ast}(t)\right)\tilde{\theta}(t)+\Delta(t)\,. \label{eq:hatG_20240302_3}
\end{align}
On the other hand, from the time-derivative of (\ref{eq:tildeThetai_v1}) and the ESC scheme depicted in Fig.~\ref{fig:BD_ESC_UVC}, the dynamics that governs $\hat{\theta}(t)$, as well as $\tilde{\theta}(t)$, is given by
\begin{align}
\frac{d\tilde{\theta}}{dt}(t)&=\frac{d\hat{\theta}}{dt}(t)=u(t) \label{eq:gradient_dtildeThetadt_20250206_1}\,, 
\end{align}
with $u(t)= [u_{1}(t)\,,u_{2}(t)\,,\ldots\,,u_{n}(t)]^{\top} \in  {R}^{n}$. Moreover, by using (\ref{eq:gradient_dtildeThetadt_20250206_1}), the time-derivative of (\ref{eq:hatG_20240302_3}) is given by 
\begin{align}
\frac{d\hat{G}}{dt}(t)& {=}\left(H^{\ast} {+}\Delta \! \mbox{\calligra H}^{~~~\ast}\!(t)\right)u(t) {+}\frac{d\Delta \! \mbox{\calligra H}^{~~~\ast}}{dt}\!(t)\tilde{\theta}(t) {+}\frac{d \Delta}{dt}(t), \label{eq:gradient_dhatGdt_20250206_1}
\end{align}
where, from (\ref{eq:calligraH}) and (\ref{eq:Delta}), it is easy to verify that $\dfrac{d\Delta \! \mbox{\calligra H}^{~~~\ast}}{dt}(t)$ as well as $\dfrac{d \Delta}{dt}(t)$ have null mean values.\\
\indent In Fig.~\ref{fig:BD_ESC_UVC} is depicted an equivalent closed-loop representation of a multivariable extremum seeking system, where a general control law $u(t)$ is driven by a gradient estimate. This formulation highlights the separation between estimation and control, enabling flexible design of optimization-based controllers and supporting theoretical analysis under standard assumptions.
\vspace{-0.25cm}
\subsection{Gradient-Based Unit Vector Control Law}
We assume the static state-feedback control law, for all $t\geq 0$, 
\begin{align}
u(t)=K~\frac{\hat{G}(t)}{\|\hat{G}(t)\|} \,, \quad \forall t\geq 0 \label{eq:u}
\end{align}
being such that $KH^{\ast}$ is Hurwitz. 

We assume that the control law (\ref{eq:u}) stabilizes the system at the corresponding equilibrium $\hat{G} \equiv 0$ with exponential convergence. This means that any trajectory of $\hat{\theta}(t)$ starting sufficiently near to the extremum point $\theta^{\ast}$ will converge to it exponentially, with uniform decay and overshoot bounds. It simply formalizes the idea that the control law is designed for local stabilization, regardless of the specific value of $\theta$. The control law (\ref{eq:u}) does not rely on detailed knowledge of the mapping (\ref{eq:y_v3}) or the system dynamics (\ref{eq:gradient_dtildeThetadt_20250206_1}) and (\ref{eq:gradient_dhatGdt_20250206_1}). Its design is model-independent in that sense, which is particularly advantageous in scenarios where precise models are unavailable or hard to obtain.

\subsection{Closed-loop System}

Plugging the proposed sliding mode based control law in
(\ref{eq:u}) into the time-derivative of $\hat{G}(t)$ and $\tilde{\theta}(t)$ yield the following dynamics governing equations,
\begin{align}
\frac{d\hat{G}(t)}{dt}=&\left(H^{\ast}+\Delta \! \mbox{\calligra H}^{~~~\ast}(t)\right)\ K\frac{\hat{G}(t)}{\|\hat{G}(t)\|}+\frac{d\mbox{\calligra H}~~(t)}{dt}\tilde{\theta}(t)+\frac{d \Delta(t)}{dt}\,, \label{eq:dhatGdt_20250206_2}\\
\frac{d\tilde{\theta}(t)}{dt} = & K\frac{\left(H^{\ast}+\Delta \! \mbox{\calligra H}^{~~~\ast}(t)\right)\tilde{\theta}(t)+\Delta(t)}{\left\|\left(H^{\ast}+\Delta \! \mbox{\calligra H}^{~~~\ast}(t)\right)\tilde{\theta}(t)+\Delta(t)\right\|}\,.
\label{eq:dtildeThetadt_20250206_2}
\end{align}

Now, it is possible to analyze the stability of (\ref{eq:dhatGdt_20250206_2}) and (\ref{eq:dtildeThetadt_20250206_2}) by using the average method. But before, it is necessary to write them in a appropriate form through a time scaling and augmented state to analyze the influense of $\omega$ in the dynamic system. 

The closed-loop system described by (\ref{eq:dhatGdt_20250206_2}) and (\ref{eq:dtildeThetadt_20250206_2}) highlights a crucial point: even if the product {\calligra H}~~(t)$K$ on averaging sense would form a Hurwitz matrix, the convergence to the equilibrium $\hat{G}(t) \equiv 0 $ and $\tilde{\theta}(t) \equiv 0$ would not be directly guaranteed through the $\tilde{\theta}$–dynamics. On the other hand, it is important to note that the time-varying disturbances $\Delta$ well as the time-varying matrix $\frac{d\mbox{\calligra H}(t)}{dt}$ possess zero mean values, which will be helpful in the following averaging analysis.

\subsection{Time Scaling}

To simplify the stability analysis of the closed-loop system, we introduce an appropriate time scale. By applying this time-scaling, we enable the use of averaging techniques to analyze the long-term behavior of  $\hat{G}(t)$ and $\tilde{\theta}(t)$. This approach focuses on the dynamics evolving on the slow timescale $\bar{t}$, while effectively filtering out the fast timevarying oscillations in the vector fields of (\ref{eq:dhatGdt_20250206_2}) and (\ref{eq:dtildeThetadt_20250206_2}).

By examining (\ref{eq:omega_iNotIn}), it is evident that the dither frequencies (\ref{eq:S_v1}) and (\ref{eq:M_v1}) as well as their combinations, are rational. Additionally, there exists a time period T such that
\begin{align}
T&:=2\pi \times \mbox{LMC} \left(\frac{1}{\omega_{1}}, \frac{1}{\omega_2}, ..., \frac{1}{\omega_n} \right), \label{eq:omega_event_1_siso}
\end{align}
where LCM denotes the least common multiple. Hence, we define a new time scale for the dynamics (\ref{eq:dhatGdt_20250206_2}) and (\ref{eq:dtildeThetadt_20250206_2}) by using the time scaling $\bar{t} = \omega t$,
\begin{align}
    \omega := \frac{2\pi}{T}\,.
    \label{eq:omega_scaled}
\end{align}
\indent Substituing in (\ref{eq:dhatGdt_20250206_2}) and (\ref{eq:dtildeThetadt_20250206_2}) result,
\begin{align}
\frac{d\hat{G}(\bar{t})}{d\bar{t}}& {=}\frac{1}{\omega}\left(H^{\ast}+\Delta \! \mbox{\calligra H}^{~~~\ast}(t)\right)K\frac{\hat{G}(t)}{\|\hat{G}(t)\|}  {+}\frac{1}{\omega}\frac{d\mbox{\calligra H}~~(t)}{dt}\tilde{\theta}(t) {+}\frac{1}{\omega}\frac{d \Delta(t)}{dt}\,, \label{eq:gradient_dotHatGav_event_4_siso} \\
\frac{d\tilde{\theta}(\bar{t})}{d\bar{t}}& {=} \frac{1}{\omega}K\frac{\left(H^{\ast}+\Delta \! \mbox{\calligra H}^{~~~\ast}(t)\right)\tilde{\theta}(t)+\Delta(t)}{\left\|\left(H^{\ast}+\Delta \! \mbox{\calligra H}^{~~~\ast}(t)\right)\tilde{\theta}(t)+\Delta(t)\right\|}\,. \label{eq:gradient_dotTildeTheta_3_event_siso}
\end{align}
\subsection{Average Closed-Loop System}
Now, by defining the augmented state
\textcolor{black}{
\begin{align}
X^{\top}(\bar{t}):=\begin{bmatrix}\hat{G}^{\top}(\bar{t})\,, \tilde{\theta}^{\top}(\bar{t}) 
\end{bmatrix}^{\top}\,, \label{eq:xT_vector}
\end{align}
}
one arrives at the dynamics
\textcolor{black}{
\begin{align}
\dfrac{dX(\bar{t})}{d\bar{t}}&=\dfrac{1}{\omega}\mathcal{F}\left(\bar{t},X,\dfrac{1}{\omega}\right) \label{eq:gradient_dotX_event_siso}
\,.
\end{align}}
\indent Due to the discontinuous nature of the proposed control strategy, the averaging theory for discontinuous systems is employed in the paper, according to \cite{P:1979}.\\
\indent The augmented system (\ref{eq:gradient_dotX_event_siso}) has a small parameter $1/\omega$ as well as a $T$-periodic function $\mathcal{F}\left(\bar{t},X,\dfrac{1}{\omega}\right)$ in $\bar{t}$, hence it can be studied through the averaging method for stability analysis by averaging  $\mathcal{F}\left(\bar{t},X,\dfrac{1}{\omega}\right)$ at $\displaystyle \lim_{\omega\to \infty}\dfrac{1}{\omega}=0$, as shown in \cite{P:1979}, {\it i.e.},
\begin{align}
\dfrac{dX_{\rm av}(\bar{t})}{d\bar{t}}&=\dfrac{1}{\omega}\mathcal{F}_{\rm av}\left(X_{\rm av}\right) \,, \label{eq:gradient_dotXav_event_1_siso} \\
\mathcal{F}_{\rm av}\left(X_{\rm av}\right)&=\dfrac{1}{T}\int_{0}^{T}\mathcal{F}\left(\delta,X_{\rm av},0\right)d\delta
\,.  \label{eq:gradient_mathcalFav_event_siso}
\end{align}
Basically, the problem in the averaging method is to determine in what sense the behavior of the autonomous system (\ref{eq:gradient_dotXav_event_1_siso}) approximates the behavior of the nonautonomous system (\ref{eq:gradient_dotX_event_siso}) such that (\ref{eq:gradient_dotX_event_siso}) can be represented as a perturbed version of the system (\ref{eq:gradient_dotXav_event_1_siso}).

Therefore, treating the non-periodic states $\hat{G}(\bar{t})$, $\tilde{\theta}(\bar{t})$ as constants in (\ref{eq:gradient_dotHatGav_event_4_siso})-(\ref{eq:gradient_dotX_event_siso}), one gets the following average closed-loop system 
\begin{align}
    \frac{d\hat{G}_{\rm av}(t)}{d\bar{t}} & = \frac{1}{\omega}H^{\ast}K\frac{\hat{G}_{\rm av}(\bar{t)}}{\|\hat{G}_{\rm av}(\bar{t)}\|}\,,\label{eq:gradient_dHatG_av}  \\
    \frac{d\tilde{\theta}_{\rm av}(\bar{t})}{d\bar{t}} & = \frac{1}{\omega}KH^{\ast}\frac{\tilde{\theta}_{\rm av}(\bar{t)}}{\|\tilde{\theta}_{\rm av}(\bar{t)}\|}\,,\label{eq:dTildeTheta_av}\\
    \hat{G}_{\rm av}(\bar{t})&=H^{\ast}\tilde{\theta}_{\rm av}(\bar{t})\,. \label{eq:gradient_HatG_av}
\end{align}

\subsection{Stability Analysis}
The next theorem guarantees finite-time stability of the proposed Slide Mode Control, as depicted in Fig. ~\ref{fig:BD_ESC_UVC}.
\begin{theo}
Consider the closed-loop average dynamics of the gradient estimate (\ref{eq:gradient_dHatG_av}) and that Assumption 1 holds. For a sufficiently large $\omega > 0$, defined in (\ref{eq:omega_scaled}), there exist a finite-time $t_{S} \in \left(0,\frac{2\omega\lambda_{max}(P)}{\lambda_{min}(Q)}\|\hat{G}_{\rm av}(0)\|\right]$, such
that the sliding motion on $\hat{G}_{\rm av}(t) \equiv 0$ occurs. 
Furthermore, for the non-average system (\ref{eq:gradient_dhatGdt_20250206_1}), the input converge to an error of order $\mathcal{O}(a + \frac{1}{\omega})$ 
\begin{align}
  \| \theta(t) - \theta^* \|& \leq
  \left\{
  \begin{array}{ll}
\|H^{\ast}\|\|H^{\ast-1}\|\sqrt{\dfrac{\lambda_{\max}(P)}{\lambda_{\min}(P)}} \|\theta(0) - \theta^{\ast} \| - \dfrac{1}{2} \dfrac{ \|H^{*-1}\|\lambda_{\min}(Q)}{\sqrt{\lambda_{\max}(P)} \sqrt{\lambda_{\min}(P)}} \, t+ \mathcal{O} \left( a + \frac{1}{\omega} \right) 
& \forall t \in [0, t_{S}), \\
 \mathcal{O} \left( a + \frac{1}{\omega} \right) , ~ &\forall t \in [t_{S}, +\infty).
  \end{array}
  \right. \label{eq:theta_minimal_v02}
\end{align}
And for the output,
\begin{align}
  |y(t) - Q^{\ast}| &\leq 
  \left\{
    \begin{array}{ll}
      2\|H^{\ast}\|\left(
      \|H^{\ast}\|\|H^{\ast-1}\| 
      \sqrt{\frac{\lambda_{\max}(P)}{\lambda_{\min}(P)}} 
      \| \theta(0) - \theta^{\ast} \| 
      - \frac{1}{2} 
      \frac{\|H^{\ast-1}\|\lambda_{\min}(Q)}{
        \sqrt{\lambda_{\max}(P)} \sqrt{\lambda_{\min}(P)}} 
      t \right)^{2} 
      + \mathcal{O}\left(a^{2} + \frac{1}{\omega^{2}}\right), 
      & \forall t \in [0, t_{S}), \\
      \mathcal{O}\left(a^{2} + \frac{1}{\omega^{2}}\right), 
      & \forall t \in [t_{S}, +\infty).
    \end{array}
  \right. \label{eq:gradient_y_norm_v03}
\end{align}
\end{theo}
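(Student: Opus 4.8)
\section*{Proof proposal}

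The plan is to first establish finite-time convergence of the averaged gradient dynamics (\ref{eq:gradient_dHatG_av}) with a quadratic Lyapunov function, and then to transfer the estimate onto the true closed loop through averaging for discontinuous systems. Since $K$ is diagonal and $H^{\ast}$ is symmetric, the products $H^{\ast}K$ and $KH^{\ast}$ are similar and hence share their spectrum, so the assumed Hurwitzness of $KH^{\ast}$ carries over to the system matrix $H^{\ast}K$ appearing in (\ref{eq:gradient_dHatG_av}). I would then fix symmetric positive-definite matrices $P,Q$ solving the Lyapunov equation $(H^{\ast}K)^{\top}P + P(H^{\ast}K) = -Q$ and take $V = \frac{1}{2}\hat{G}_{\rm av}^{\top}P\hat{G}_{\rm av}$ as the candidate.

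Differentiating $V$ along (\ref{eq:gradient_dHatG_av}), the unit-vector normalization collapses the quadratic form into a term linear in the norm,
\[ \frac{dV}{d\bar t} = -\frac{1}{2\omega}\frac{\hat{G}_{\rm av}^{\top}Q\hat{G}_{\rm av}}{\|\hat{G}_{\rm av}\|} \le -\frac{\lambda_{\min}(Q)}{2\omega}\|\hat{G}_{\rm av}\|. \]
Using $\|\hat{G}_{\rm av}\|\ge\sqrt{2V/\lambda_{\max}(P)}$ yields the finite-time inequality $dV/d\bar t\le -c\sqrt{V}$ with $c=\lambda_{\min}(Q)/\big(\omega\sqrt{2\lambda_{\max}(P)}\big)$; integrating gives $\sqrt{V(\bar t)}\le\sqrt{V(0)}-\frac{c}{2}\bar t$ and hence a reaching time $\bar t_S=2\sqrt{V(0)}/c$. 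Bounding $\sqrt{V(0)}\le\sqrt{\lambda_{\max}(P)/2}\,\|\hat{G}_{\rm av}(0)\|$ reproduces exactly the stated interval for $t_S$ in the scaled time $\bar t$; dividing by $\omega$ recovers the physical reaching time implicit in the $\theta$-envelope.

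For the parameter and output envelopes, I convert the decaying norm bound $\|\hat{G}_{\rm av}(\bar t)\|\le\sqrt{2V(\bar t)/\lambda_{\min}(P)}$ into an error bound via $\tilde\theta_{\rm av}=(H^{\ast})^{-1}\hat{G}_{\rm av}$ from (\ref{eq:gradient_HatG_av}), so that $\|\tilde\theta_{\rm av}\|\le\|H^{\ast -1}\|\,\|\hat{G}_{\rm av}\|$, and bound the initial value by $\|\hat{G}_{\rm av}(0)\|\le\|H^{\ast}\|\,\|\tilde\theta_{\rm av}(0)\|$. Because $S(0)=0$ in (\ref{eq:S_v1}), one has $\tilde\theta(0)=\theta(0)-\theta^{\ast}$, which recovers the affine-in-$t$ coefficients of (\ref{eq:theta_minimal_v02}) after undoing $\bar t=\omega t$. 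Passing to the genuine discontinuous system (\ref{eq:dhatGdt_20250206_2})--(\ref{eq:dtildeThetadt_20250206_2}) by Plotnikov's averaging theorem for right-hand sides with discontinuities \cite{P:1979} introduces $\mathcal{O}(1/\omega)$ trajectory closeness, and adding the dither through $\theta-\theta^{\ast}=\tilde\theta+S$ with $\|S\|=\mathcal{O}(a)$ produces the $\mathcal{O}(a+1/\omega)$ residual. The output bound (\ref{eq:gradient_y_norm_v03}) then follows by substituting the $\theta$-envelope into the quadratic map (\ref{eq:y_v2}) via $|y-Q^{\ast}|\le\frac{1}{2}\|H^{\ast}\|\,\|\theta-\theta^{\ast}\|^{2}$ and expanding the square, which yields the squared envelope plus an $\mathcal{O}(a^{2}+1/\omega^{2})$ term.

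The main obstacle is precisely this last transfer. The classical averaging theorem presumes continuous (Lipschitz) vector fields, whereas $\hat{G}/\|\hat{G}\|$ is discontinuous on the sliding surface $\hat{G}=0$, so the argument must be carried out in the Filippov/Plotnikov framework rather than with smooth averaging. More delicately, the averaged model reaches $\hat{G}_{\rm av}\equiv0$ \emph{exactly} in finite time, while the $\mathcal{O}(1/\omega)$ averaging gap only permits the real trajectory to enter an $\mathcal{O}(a+1/\omega)$ neighborhood of the extremum; reconciling exact finite-time convergence of the average with merely practical convergence of the actual system is what forces the two-regime ($t<t_S$ versus $t\ge t_S$) structure and the residual terms in (\ref{eq:theta_minimal_v02})--(\ref{eq:gradient_y_norm_v03}).
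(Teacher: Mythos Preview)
Your proposal is correct and follows essentially the same route as the paper: a quadratic Lyapunov function for the averaged $\hat G_{\rm av}$-dynamics, Rayleigh--Ritz bounds leading to $\dot V\le -c\sqrt{V}$ and a comparison argument for the finite reaching time, conversion to $\tilde\theta_{\rm av}$ via (\ref{eq:gradient_HatG_av}), Plotnikov averaging to absorb the discontinuity, and the triangle inequality with $\theta-\theta^{\ast}=\tilde\theta+S$ for the residuals. The only cosmetic differences are your factor $\tfrac12$ in $V$ (which cancels in the constants) and your explicit observation that $H^{\ast}K$ and $KH^{\ast}$ are similar, which the paper uses without comment.
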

\begin{proof}
From a Lyapunov function candidate,
\begin{align}
V_{\rm{av}}(\bar{t})=\hat{G}^{\top}_{\rm{av}}(\bar{t})P\hat{G}_{\rm{av}}(\bar{t}) \,, \quad P=P^T>0\,. \label{eq:lyapunov_function}
\end{align} 
the product $H^{*}K$ is Hurwitz, by a given $Q=Q^T>0$, exist $P=P^T$, a symmetric matrix, such that the Lyapunov's equation is $K^{\top}H^{*\top}P+PH^{*}K=-Q$. Thus, through a time scaling and a time derivative of (\ref{eq:lyapunov_function}) is
\begin{align}
\frac{dV_{\rm av}(\bar{t})}{d\bar{t}}= &~\frac{1}{\omega}\frac{\hat{G}_{\rm av}^{\top}(\bar{t)}}{\|\hat{G}_{\rm av}(\bar{t)}\|}K^{\top}H^{\ast T}P\hat{G}_{\rm{av}}(\bar{t}) + \frac{1}{\omega}\hat{G}^{\top}_{\rm av}(\bar{t)}P H^{\ast}K\frac{\hat{G}_{\rm av}(\bar{t)}}{\|\hat{G}_{\rm av}(\bar{t)}\|}\,,\nonumber \\
= & -\frac{1}{\omega}\frac{\hat{G}^{\top}_{\rm{av}}(\bar{t})Q\hat{G}_{\rm{av}}(\bar{t})}{\|\hat{G}_{\rm av}(t)\|}\,.\label{eq:lyapunov_derivative}
\end{align}
Through the Rayleigh-Ritz inequality \cite{K:2002}, 
\begin{align}
    \lambda_{\min}(Q)\|\hat{G}_{\rm av}(\bar{t})\|^{2}\leq V_{\rm av}(\bar{t}) \leq \lambda_{\max}(Q)\|\hat{G}_{\rm av}(\bar{t})\|^{2}\,,\label{rayleigh-ritz_Q}
\end{align}
the Lyapunov candidate derivative (\ref{eq:lyapunov_derivative}) is limited by
\begin{align}
    \frac{dV_{\rm av}(\bar{t})}{d\bar{t}} \leq -\frac{1}{\omega}\lambda_{\min}(Q)\|\hat{G}_{\rm av}(\bar{t})\|\,.
    \label{eq:lyapunov_derivative2}
\end{align}
Now, by using the Rayleigh-Ritz inequality \cite{K:2002} to the $P$ matrix, 
\begin{align}
    \lambda_{\min}(P)\|\hat{G}_{\rm av}(\bar{t})\|^{2}\leq V_{\rm av}(\bar{t}) \leq \lambda_{\max}(P)\|\hat{G}_{\rm av}(\bar{t})\|^{2}\,,\label{rayleigh-ritz_P}
\end{align}
the equation (\ref{eq:lyapunov_derivative2}) is rewritten to
\begin{align}
    \frac{dV_{\rm av}(\bar{t})}{d\bar{t}} \leq -\frac{1}{\omega}\frac{\lambda_{\min}(Q)}{\sqrt{\lambda_{\max}(P)}}\sqrt{V_{\rm av}(\bar{t})}\,.
    \label{eq:lyapunov_derivative3}
\end{align}
To solve this inequality, the Comparison Lemma \cite{K:2002} is used,
\begin{align}
    V_{\rm av}(t) &\leq \bar{V}_{\rm av}(\bar{t})\,,~\forall \bar{t} \geq 0\,,\label{eq:comparison_lemma}
\end{align}
results, 
\begin{align}
    \frac{d\bar{V}_{\rm av}(\bar{t})}{d\bar{t}} & = -\frac{1}{\omega}\frac{\lambda_{\min}(Q)}{\sqrt{\lambda_{\max}(P)}}\sqrt{\bar{V}_{\rm av}(\bar{t})}\,, \quad \bar{V}_{\rm av}(0)=V_{\rm av}(0)\,. \label{eq:gradient_barV_v1}
\end{align}
The solution of (\ref{eq:gradient_barV_v1}) with the initial condition $\bar{V}_{\rm av}(0)$ is given by
\begin{align}
    \sqrt{\bar{V}_{\rm av}(\bar{t})} = \sqrt{\bar{V}_{\rm av}(0)}  -\frac{1}{2\omega}\frac{\lambda_{\min}(Q)}{\sqrt{\lambda_{\max}(P)}}\bar{t}\,.
    \label{eq:Vav_solution}
\end{align}
By using (\ref{rayleigh-ritz_P}), the equation (\ref{eq:Vav_solution}) can be upper bounded by,
\begin{align}
    \sqrt{\lambda_{\min}(P)}\|\hat{G}_{\rm av}(\bar{t})\| \leq &
    \sqrt{\lambda_{\max}(P)}\|\hat{G}_{\rm av}(0)\|
    -\frac{1}{2\omega}\frac{\lambda_{\min}(Q)}{\sqrt{\lambda_{\max}(P)}}\bar{t}\,,
\end{align}
thus,
\begin{align}
    \|\hat{G}_{\rm av}(\bar{t})\| \leq & \sqrt{\frac{\lambda_{max}(P)}{\lambda_{min}(P)}}\|\hat{G}_{\rm av}(0)\| - \frac{1}{2\omega}\frac{\lambda_{\min}(Q)}{\sqrt{\lambda_{\max}(P)}\sqrt{\lambda_{\min}(P)}}\bar{t}\,. \label{eq:Gav_solution}
\end{align}
From (\ref{eq:Gav_solution}), there exist a finite-time $t_{S} \in  \left(0,\frac{2\omega\lambda_{max}(P)}{\lambda_{min}(Q)}\|\hat{G}_{\rm av}(0)\|\right]$, such that the sliding motion occurs. Moreover, $H^{\ast}$ is invertible and, from (\ref{eq:gradient_HatG_av}), $\tilde{\theta}_{\rm av}(\bar{t}) = H^{\ast-1}\hat{G}_{\rm av}(\bar{t})$, the following inequality is established $\|H^{\ast-1}\|^{-1}\|\tilde{\theta}_{\rm av}(\bar{t})\| \leq \|H^{\ast}\tilde{\theta}_{\rm av}(\bar{t})\| \leq \|H^{\ast}\|\|\tilde{\theta}_{\rm av}(\bar{t})\|$ and, therefore, we can state that,
\begin{align}
\|\tilde{\theta}_{\rm av}(\bar{t})\| &\leq
\begin{cases}
\|H^{\ast}\|\|H^{\ast-1}\|\sqrt{\frac{\lambda_{\max}(P)}{\lambda_{\min}(P)}} \|\tilde{\theta}_{\rm av}(0)\| - \frac{1}{2\omega} \frac{\|H^{\ast-1}\|\lambda_{\min}(Q)}{\sqrt{\lambda_{\max}(P)} \sqrt{\lambda_{\min}(P)}} \bar{t}\,. \\
0, \quad \forall \bar{t} \geq \bar{t}_{S}.
\label{eq:gradient_norm_theta_zero}
\end{cases}
\end{align}

Since (\ref{eq:gradient_dhatGdt_20250206_1}) has a discontinuous right-hand side, but is also $T$-periodic in $t$, and noting that the average system with state 
$\tilde{\theta}_{\rm av}(\bar{t})$ is finite-time stable according to  (\ref{eq:gradient_norm_theta_zero}), we can invoke the averaging theorem in~\cite{P:1979} to conclude that
\begin{align}
    \|\tilde{\theta}(\bar{t}) - \tilde{\theta}_{\rm av}(\bar{t})\| \leq \mathcal{O}\left(\frac{1}{\omega}\right)\,. \label{eq:tilde_theta_theta_av}
\end{align}
By applying the triangle inequality \cite{A:1957}, we also obtain:
\begin{align}
\|\tilde{\theta}(\bar{t})\| 
&\leq \|\tilde{\theta}_{\rm av}(\bar{t})\| + \mathcal{O}\left( \frac{1}{\omega} \right)
\leq 
\|H^{\ast}\|\|H^{\ast-1}\|\sqrt{\dfrac{\lambda_{\max}(P)}{\lambda_{\min}(P)}} \|\tilde{\theta}_{\rm av}(0)\|
- \dfrac{1}{2\omega} \dfrac{\|H^{\ast-1}\| \lambda_{\min}(Q)}{\sqrt{\lambda_{\max}(P)} \sqrt{\lambda_{\min}(P)}} \, \bar{t}\,, 
\end{align}
Now, backing to the non-average system, from (\ref{eq:S_v1}) and from Fig. ~\ref{fig:BD_ESC_UVC}, we can verify that,
\begin{align}
    \theta(t) - \theta^{\ast} = \tilde{\theta}(t) + S(t)\,,
    \label{eq:theta_fig1}
\end{align}
whose Euclidian norm satisfies,
\begin{align}
    \| \theta(t) - \theta^* \| &= \| \tilde{\theta}(t) + S(t) \| \leq \| \tilde{\theta}(t) \| + \| S(t) \| \nonumber \\
    &\leq
    \begin{cases} 
     \|H^{*}\| \|H^{*-1}\| \sqrt{\dfrac{\lambda_{\max}(P)}{\lambda_{\min}(P)}} \|\theta(0) - \theta^{\ast}\|
- \dfrac{1}{2} \dfrac{\|H^{*-1}\| \lambda_{\min}(Q)}{\sqrt{\lambda_{\max}(P)} \sqrt{\lambda_{\min}(P)}} \, t + ~\mathcal{O}\left(a+ \dfrac{1}{\omega}\right), 
& \forall t \in [0, t_{S}),\\
         ~\mathcal{O}\left(a+ \dfrac{1}{\omega}\right), ~~ &\forall t \in [t_{S}, +\infty).
    \end{cases}
\label{eq:gradient_theta_diff}
\end{align}
From (\ref{eq:gradient_theta_diff}), the control strategy (\ref{eq:u}), influenced by the Hessian, $H^{\ast}$, allows the convergence to the residual set $\mathcal{O}\left(a + \frac{1}{\omega}\right)$ in a finite-time, within the interval  
\begin{align}
    t_{S} \in \left(0,~2\frac{\lambda_{max}(P)}{\lambda_{min}(Q)}\|H^{\ast}\| \|\theta(0) - \theta^{*}\|\right] \,.
\end{align}
From (\ref{eq:y_v2}) and the Cauchy-Schwarz inequality \cite{S:2008} it is possible to write,
\begin{align}
    |y(t) - Q^{*}|=|(\theta(t) - \theta^{\ast})^{T}H^{\ast}(\theta(t) - \theta^{\ast})|\leq \|H^{\ast}\|\|\theta(t) - \theta^{*}\|^{2}\,,
    \label{eq:gradient_norm_y}
\end{align}
and substituting (\ref{eq:gradient_theta_diff}) in (\ref{eq:gradient_norm_y}), lead to the following upper bound
\begin{align}
|y(t) - Q^{*}| 
& \leq \left\{
\begin{array}{ll}
\left[
     \|H^{\ast}\|\|H^{*-1}\|\sqrt{\frac{\lambda_{\max}(P)}{\lambda_{\min}(P)}} \| \theta(0) - \theta^* \| - \frac{1}{2} \frac{ \|H^{*-1}\|\lambda_{\min}(Q)}{
    \sqrt{\lambda_{\max}(P)} \sqrt{\lambda_{\min}(P)}} t 
    + \mathcal{O}\left(a + \frac{1}{\omega}\right)
\right]^2\,, &\forall t \in (0, t_{S}], \\
      \mathcal{O}\left(a+ \frac{1}{\omega}\right)^{2}, & \forall t \in (t_{S}, +\infty) .\label{eq:gradient_y_norm_v32} \\
\end{array}\right.
\\ & \leq \left\{
\begin{array}{ll}
2\|H^{\ast}\|\left(\|H^{\ast}\|\|H^{*-1}\|\sqrt{\frac{\lambda_{\max}(P)}{\lambda_{\min}(P)}} \| \theta(0) - \theta^* \| 
        - \frac{1}{2} \frac{\|H^{*-1}\|\lambda_{\min}(Q)}{
        \sqrt{\lambda_{\max}(P)} \sqrt{\lambda_{\min}(P)}} t \right)^{2} + \mathcal{O}\left(a^{2} + \frac{1}{\omega^{2}}\right),\hspace{-0.65cm} &\forall t \in (0, t_{S}], \\
      \mathcal{O}\left(a^{2} + \frac{1}{\omega^{2}}\right),\hspace{-0.65cm} & \forall t \in (t_{S}, +\infty) .\label{eq:gradient_y_norm_v3}
\end{array}\right.
\end{align}
From (\ref{eq:gradient_y_norm_v3}), the convergence of $|y(t) - Q^*|$ to the residual set $\mathcal{O}\left(a^2 + \frac{1}{\omega^{2}}\right)$ occurs for $t > t_{S}$. Then, the proof is completed.
\hfill$\square$
\end{proof}
\section{Newton-based Unit Vector Extremum Seeking Control}
\begin{figure}[H]
\centering
\includegraphics[width=8.2cm]{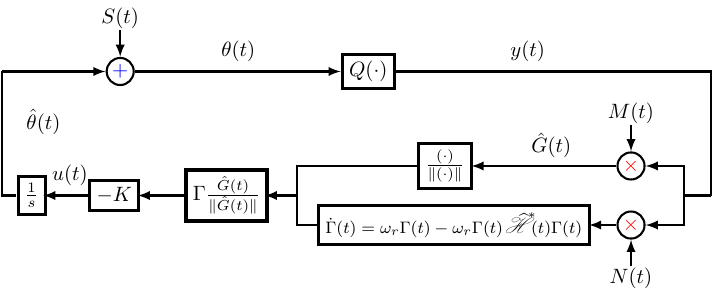}
\caption{Block diagram of Newton-based extremum seeking with unit vector control.}
\label{fig:BD_newton_ESC_UVC}
\end{figure}
Fig.~\ref{fig:BD_newton_ESC_UVC} depicts the Newton-based extremum seeking algorithm applied to a static map, where  $\omega_r$ is a positive real frequency used to drive periodic excitation. The algorithm relies on two essential mechanisms. The first is the \emph{perturbation structure}, defined by the time-varying matrix $N(t)\in  {R}^{n \times n}$, with elements 
\begin{align}
    N_{ii}(t)& {:=} {-}\frac{8}{a_{i}^2}\cos(2\omega_{i}t)\,, \label{eq:Nii}\\
    N_{ij}(t)& {:=}\frac{2}{a_{i}a_{j}}\cos((\omega_{i} {-}\omega_{j})t) {-}\frac{2}{a_{i}a_{j}}\cos((\omega_{i} {+}\omega_{j})t),~ j\neq i , \label{eq:Nij}
\end{align}
which introduces carefully chosen input oscillations that allow for the estimation of second-order information—namely, the Hessian matrix $H^{\ast}$ of the unknown cost function (\ref{eq:y_v2}), by means of
\begin{align}
    \widehat{\!\!\!\!\!\!\!\!\mbox{\calligra H}}^{~~~\ast}(t) &:= N(t)y(t)\,. \label{eq:calligraHHat_20250527}
\end{align}
Therefore, by plugging (\ref{eq:y_v3}),  (\ref{eq:Nii}) and (\ref{eq:Nij}) into (\ref{eq:calligraHHat_20250527}), the components of the estimate  ~~~~~$\widehat{\!\!\!\!\!\!\!\!\mbox{\calligra H}}^{~~\ast}_{~~ii}(t)$ and ~~~~~$\widehat{\!\!\!\!\!\!\!\!\mbox{\calligra H}}^{~~\ast}_{~~ij}(t)$ of the unknown Hessian matrix $H^{\ast}$ are given by
\begin{align}
    \widehat{\!\!\!\!\!\!\!\!\mbox{\calligra H}}^{~~\ast}_{~~ii}(t) &= H^{\ast}_{ii}  {+}\Delta~~~~\! \widehat{\mbox{\calligra \!\!\!\!\!\!\!\! H}}^{~~\ast}_{~~ii}(t) +N_{ii}(t)\left[\frac{1}{2}\tilde{\theta}^{\top}(t)H^{\ast}\tilde{\theta}(t) {+}S^{\top}(t)H^{\ast}\tilde{\theta}(t)\right]\,, \label{eq:calligraHiiHat} \\
    \Delta~~~~\! \widehat{\mbox{\calligra \!\!\!\!\!\!\!\! H}}^{~~\ast}_{~~ii}(t)&= -H^{\ast}_{ii}\cos(2\omega_{i}t)+N_{ii}(t)Q^{\ast} -2\sum_{\substack{k=1 \\ k\neq i}}^{n}H^{\ast}_{kk}\frac{a_{k}^{2}}{a_{i}^{2}}\cos(2\omega_{i}t)+\sum_{\substack{k=1 \\ k\neq i}}^{n}H^{\ast}_{kk}\frac{a_{k}^{2}}{a_{i}^{2}}\left[\cos(2(\omega_{i}-\omega_{k})t)+\cos(2(\omega_{i}+\omega_{k})t)\right] \nonumber  \\
    &-2\sum_{k=1}^{n}\sum_{l=k+1}^{n}H^{\ast}_{kl}\frac{a_{k}a_{l}}{a_{i}^2}\cos((2\omega_{i}+\omega_{k}-\omega_{l})t) -2\sum_{k=1}^{n}\sum_{l=k+1}^{n}H^{\ast}_{kl}\frac{a_{k}a_{l}}{a_{i}^2}\cos((2\omega_{i}-\omega_{k}+\omega_{l})t) \nonumber \\
    & +2\sum_{k=1}^{n}\sum_{l=k+1}^{n}H^{\ast}_{kl}\frac{a_{k}a_{l}}{a_{i}^2}\cos((2\omega_{i}+\omega_{k}+\omega_{l})t)+2\sum_{k=1}^{n}\sum_{l=k+1}^{n}H^{\ast}_{kl}\frac{a_{k}a_{l}}{a_{i}^2}\cos((2\omega_{i}-\omega_{k}-\omega_{l})t)\,, \label{eq:DeltaCalligraHiiHat}  \\
    \widehat{\!\!\!\!\!\!\!\!\mbox{\calligra H}}^{~~\ast}_{~~ij}(t)&= H^{\ast}_{ij}  {+}\Delta~~~~\! \widehat{\mbox{\calligra \!\!\!\!\!\!\!\! H}}^{~~\ast}_{~~ij}(t) 
    + N_{ij}(t)\left[\frac{1}{2}\tilde{\theta}^{\top}(t)H^{\ast}\tilde{\theta}(t) {+}S^{\top}(t)H^{\ast}\tilde{\theta}(t)\right]\,, \label{eq:calligraHijHat}
\end{align}
\begin{align}
    \Delta~~~~\! \widehat{\mbox{\calligra \!\!\!\!\!\!\!\! H}}^{~~\ast}_{~~ij}(t)&= -H_{ij}^{\ast}\cos(2\omega_{i}t)-H_{ij}^{\ast}\cos(2\omega_{j}t) +\frac{H_{ij}^{\ast}}{2}\cos(2(\omega_{i}-\omega_{j})t)+\frac{H_{ij}^{\ast}}{2}\cos(2(\omega_{i}+\omega_{j})t) \nonumber  \\
    & + \frac{1}{2}\sum_{\substack{k=1 \\ k\neq i}}^{n}H^{\ast}_{kk}\frac{a_{k}^{2}}{a_{i}a_{j}}\left[\cos(2(\omega_{i}-\omega_{j})t)+\cos(2(\omega_{i}+\omega_{j})t)\right] -\frac{1}{2}\sum_{\substack{k=1 \\ k\neq i}}^{n}H^{\ast}_{kk}\frac{a_{k}^{2}}{a_{i}a_{j}}\cos((\omega_{i}-\omega_{j}+2\omega_{k})t) \nonumber  \\
    & -\frac{1}{2}\sum_{\substack{k=1 \\ k\neq i}}^{n}H^{\ast}_{kk}\frac{a_{k}^{2}}{a_{i}a_{j}}\cos((\omega_{i}-\omega_{j}-2\omega_{k})t) +\frac{1}{2}\sum_{\substack{k=1 \\ k\neq i}}^{n}H^{\ast}_{kk}\frac{a_{k}^{2}}{a_{i}a_{j}}\cos((\omega_{i}+\omega_{j}+2\omega_{k})t) \nonumber\\
    &+\frac{1}{2}\sum_{\substack{k=1 \\ k\neq i}}^{n}H^{\ast}_{kk}\frac{a_{k}^{2}}{a_{i}a_{j}}\cos((\omega_{i}+\omega_{j}-2\omega_{k})t)+\frac{1}{2}\sum_{\substack{k=1 \\ k\neq i}}^{n}\sum_{l=k+1}^{n}H^{\ast}_{kl}\frac{a_{k}a_{l}}{a_{i}a_{j}}\cos((\omega_{i}-\omega_{j}+\omega_{k}-\omega_{l})t) \nonumber\\
    &+\frac{1}{2}\sum_{\substack{k=1 \\ k\neq i}}^{n}\sum_{l=k+1}^{n}H^{\ast}_{kl}\frac{a_{k}a_{l}}{a_{i}a_{j}}\cos((\omega_{i}-\omega_{j}-\omega_{k}+\omega_{l})t) -\frac{1}{2}\sum_{\substack{k=i+1 \\ k\neq j}}^{n}H^{\ast}_{ik}\frac{a_{k}}{a_{j}}\cos((\omega_{j}+\omega_{k})t)+N_{ij}(t)Q^{\ast}\nonumber \\
            & -\frac{1}{2}\sum_{\substack{k=1 \\ k\neq i}}^{n}\sum_{l=k+1}^{n}H^{\ast}_{kl}\frac{a_{k}a_{l}}{a_{i}a_{j}}\cos((\omega_{i}-\omega_{j}+\omega_{k}+\omega_{l})t)  -\frac{1}{2}\sum_{\substack{k=1 \\ k\neq i}}^{n}\sum_{l=k+1}^{n}H^{\ast}_{kl}\frac{a_{k}a_{l}}{a_{i}a_{j}}\cos((\omega_{i}-\omega_{j}-\omega_{k}-\omega_{l})t) \nonumber  \\
    & -\frac{1}{2}\sum_{\substack{k=1 \\ k\neq i}}^{n}\sum_{l=k+1}^{n}H^{\ast}_{kl}\frac{a_{k}a_{l}}{a_{i}a_{j}}\cos((\omega_{i}+\omega_{j}+\omega_{k}-\omega_{l})t)  -\frac{1}{2}\sum_{\substack{k=1 \\ k\neq i}}^{n}\sum_{l=k+1}^{n}H^{\ast}_{kl}\frac{a_{k}a_{l}}{a_{i}a_{j}}\cos((\omega_{i}+\omega_{j}-\omega_{k}+\omega_{l})t)\nonumber \\
        & +\frac{1}{2}\sum_{\substack{k=1 \\ k\neq i}}^{n}\sum_{l=k+1}^{n}H^{\ast}_{kl}\frac{a_{k}a_{l}}{a_{i}a_{j}}\cos((\omega_{i}+\omega_{j}+\omega_{k}+\omega_{l})t)  +\frac{1}{2}\sum_{\substack{k=1 \\ k\neq i}}^{n}\sum_{l=k+1}^{n}H^{\ast}_{kl}\frac{a_{k}a_{l}}{a_{i}a_{j}}\cos((\omega_{i}+\omega_{j}-\omega_{k}-\omega_{l})t) \nonumber  \\ 
        & +\frac{1}{2}\sum_{\substack{k=i+1 \\ k\neq j}}^{n}H^{\ast}_{ik}\frac{a_{k}}{a_{j}}\cos((2\omega_{i}-\omega_{j}-\omega_{k})t)  +\frac{1}{2}\sum_{\substack{k=i+1 \\ k\neq j}}^{n}H^{\ast}_{ik}\frac{a_{k}}{a_{j}}\cos((-\omega_{j}+\omega_{k})t) \nonumber\\
        &- \frac{1}{2}\sum_{\substack{k=i+1 \\ k\neq j}}^{n}H^{\ast}_{ik}\frac{a_{k}}{a_{j}}\cos((2\omega_{i}+\omega_{j}+\omega_{k})t)\,.
\label{eq:DeltaCalligraHijHat} 
\end{align}
Thus, by using (\ref{eq:calligraHiiHat})--(\ref{eq:DeltaCalligraHijHat}), we can define the time-varying matrix $\Delta~~~~\! \widehat{\mbox{\calligra \!\!\!\!\!\!\!\! H}}^{~~\ast}(t) \in  {R}^{n \times n}$ such that
\begin{align}
\Delta~~~~\! \widehat{\mbox{\calligra \!\!\!\!\!\!\!\! H}}^{~~~\ast}(t)&:= \begin{bmatrix}
													\Delta~~~~\! \widehat{\mbox{\calligra \!\!\!\!\!\!\!\! H}}^{~~~\ast}_{~~~11}(t) & \Delta~~~~\! \widehat{\mbox{\calligra \!\!\!\!\!\!\!\! H}}^{~~~\ast}_{~~~12}(t) & \ldots & \Delta~~~~\! \widehat{\mbox{\calligra \!\!\!\!\!\!\!\! H}}^{~~~\ast}_{~~~1n}(t) \\
													\Delta~~~~\! \widehat{\mbox{\calligra \!\!\!\!\!\!\!\! H}}^{~~~\ast}_{~~~21}(t) & \Delta~~~~\! \widehat{\mbox{\calligra \!\!\!\!\!\!\!\! H}}^{~~~\ast}_{~~~22}(t) & \ldots & \Delta~~~~\! \widehat{\mbox{\calligra \!\!\!\!\!\!\!\! H}}^{~~~\ast}_{~~~2n}(t) \\
													\vdots                         & \vdots                         &        & \vdots                         \\
													\Delta~~~~\! \widehat{\mbox{\calligra \!\!\!\!\!\!\!\! H}}^{~~~\ast}_{~~~n1}(t) & \Delta~~~~\! \widehat{\mbox{\calligra \!\!\!\!\!\!\!\! H}}^{~~~\ast}_{~~~n2}(t) & \ldots & \Delta~~~~\! \widehat{\mbox{\calligra \!\!\!\!\!\!\!\! H}}^{~~~\ast}_{~~~nn}(t) \\
												 \end{bmatrix} \,, \label{eq:DeltaCalligraHhat} 
\end{align}
and (\ref{eq:calligraHHat_20250527}) can be rewritten as
\begin{align}
   \widehat{\!\!\!\!\!\!\!\!\mbox{\calligra H}}^{~~~\ast}(t) &:= H^{\ast}+\Delta~~~~\! \widehat{\mbox{\calligra \!\!\!\!\!\!\!\! H}}^{~~~\ast}(t) + N(t)\left[\frac{1}{2}\tilde{\theta}^{\top}(t)H^{\ast}\tilde{\theta}(t) {+}S^{\top}(t)H^{\ast}\tilde{\theta}(t)\right]\,, \label{eq:calligraHhat_20250528_v1}
\end{align}
Since the term $\tilde{\theta}^{\top}(t)H^{\ast}\tilde{\theta}(t)$ is quadratic in $\tilde{\theta}(t)$ and, therefore, may be neglected in a local analysis \cite{AK:2003},  the Hessian estimate (\ref{eq:calligraHhat_20250528_v1}) can be rewritten as 
\begin{align}
   ~~~~\widehat{\!\!\!\!\!\!\!\!\mbox{\calligra H}}^{~~~\ast}(t) &:= H^{\ast} {+}\Delta~~~~\! \widehat{\mbox{\calligra \!\!\!\!\!\!\!\! H}}^{~~~\ast}(t)  {+}S^{\top}(t)H^{\ast}\tilde{\theta}(t)N(t) . \label{eq:calligraHhat_20250528_v2}
\end{align}
The second component is a \emph{Riccati-based adaptation law}, 
\begin{align}
    \frac{d\Gamma}{dt}(t):=\omega_{r}\Gamma(t)-\omega_{r}\Gamma(t)~~~~\widehat{\!\!\!\!\!\!\!\!\mbox{\calligra H}}^{~~~\ast}\!\!(t)\Gamma(t)\,, \label{eq:dotGamma}
\end{align}
designed to compute an estimate of the \emph{inverse of the Hessian}. A key feature of this Riccati-based approach is its robustness: it ensures that a usable inverse estimate is available even when the Hessian approximation is close to singular or poorly conditioned.
\subsection{Newton-based Unit Vector Control Law}
For the Newton-based algorithm depicted in Fig.~2, the corresponding tuning law is given by
\begin{align}
u(t)=-K\frac{\Gamma(t)\hat{G}(t)}{\|\Gamma(t)\hat{G}(t)\|} \,, \quad \forall t\geq 0\,. \label{eq:uNewton}
\end{align}
Thus, to analyze the closed-loop system, we use the estimation error (\ref{eq:tildeThetai_v1}), the gradient estimate (\ref{eq:hatG_20240302_3}) and define Hessian estimate error 
\begin{align}
    \tilde{\Gamma}(t):=\Gamma(t)-H^{\ast-1}\,. \label{eq:tildeGamma}
\end{align}

\subsection{Closed-loop System}
The closed-loop system, in error variables, is given by 
\begin{align}
\frac{d\hat{G}}{dt}(t)& = - \left(H^{\ast}KH^{\ast-1} + H^{\ast}K\tilde{\Gamma}(t){+}\Delta \! \mbox{\calligra H}^{~~~\ast}\!(t)KH^{\ast-1} +\Delta \mbox{\calligra H}^{~~~\ast}(t)K\tilde{\Gamma}(t) \right) \frac{\hat{G}(t)}{\|(H^{\ast-1}+ \tilde{\Gamma}(t))\hat{G}(t)\|} \nonumber\\
& \quad +\frac{d\Delta \! \mbox{\calligra H}^{~~~\ast}}{dt}\!(t)\tilde{\theta}(t)+\frac{d \Delta}{dt}(t)\,, \label{eq:dotHatG_20250603_v1} \\ 
\frac{d\tilde{\theta}}{dt}(t)&=\frac{-K\left[\left(I_{n}+H^{\ast-1}\Delta \! \mbox{\calligra H}^{~~~\ast}\!(t)+\tilde{\Gamma}(t)H^{\ast} \right.\right. \left.\left.+~\tilde{\Gamma}(t)\Delta \! \mbox{\calligra H}^{~~~\ast}\!(t)\right)\tilde{\theta}(t)+(H^{\ast-1}+~\tilde{\Gamma}(t))\Delta(t)\right]}
{\left\|\left(I_{n}+H^{\ast-1}\Delta \! \mbox{\calligra H}^{~~~\ast}\!(t)+\tilde{\Gamma}(t)H^{\ast} .+\tilde{\Gamma}(t)\Delta \! \mbox{\calligra H}^{~~~\ast}\!(t)\right)\tilde{\theta}(t)+(H^{\ast-1}+\tilde{\Gamma}(t))\Delta(t)\right\|}\label{eq:dtildeThetadt_20250529_1}\,,\\
    \frac{d\tilde{\Gamma}}{dt}(t)&=\! {-}\omega_{r}\tilde{\Gamma}(t) -\omega_{r}\tilde{\Gamma}(t)H^{\ast}\tilde{\Gamma}(t) -\omega_{r}\tilde{\Gamma}(t)(\Delta~~~~\! \widehat{\mbox{\calligra  H}}^{~~~\ast}(t) {+}S^{\top}(t)H^{\ast}\tilde{\theta}(t)N(t))H^{\ast {-}1} \nonumber \\
    &\quad {-}\omega_{r}H^{\ast {-}1}(\Delta~~~~\! \widehat{\mbox{\calligra \!\!\!\!\!\!\!\! H}}^{~~~\ast}(t) {+}S^{\top}(t)H^{\ast}\tilde{\theta}(t)N(t))\tilde{\Gamma}(t) - \omega_{r}\tilde{\Gamma}(t)(\Delta~~~~\! \widehat{\mbox{\calligra \!\!\!\!\!\!\!\! H}}^{~~~\ast}(t) {+}S^{\top}(t)H^{\ast}\tilde{\theta}(t)N(t))\tilde{\Gamma}(t) \nonumber \\
     &-\omega_{r}H^{\ast {-}1}(\Delta~~~~\! \widehat{\mbox{\calligra \!\!\!\!\!\!\!\! H}}^{~~~\ast}(t) {+}S^{\top}(t)H^{\ast}\tilde{\theta}(t)N(t))H^{\ast {-}1}\,.\label{eq:dotTildeGamma_20250529_1}
\end{align}

\subsection{Time Scaling}

By using the transformation $\bar{t}=\omega t$, with (\ref{eq:omega_scaled}), it is possible to rewrite the dynamics (\ref{eq:dotHatG_20250603_v1})--(\ref{eq:dotTildeGamma_20250529_1}) in a different time-scale such that
\begin{align}
\frac{d\hat{G}(\bar{t})}{d\bar{t}}&= {-}\frac{1}{\omega}\left(H^{\ast}KH^{\ast-1} {+}H^{\ast}K\tilde{\Gamma}(\bar{t}) {+}\Delta \! \mbox{\calligra H}^{~~~\ast}\!(\bar{t})KH^{\ast-1} +\Delta \! \mbox{\calligra H}^{~~~\ast}\!(\bar{t})K\tilde{\Gamma}(\bar{t}) \right)\frac{\hat{G}(\bar{t})}{\|(H^{\ast-1}+\tilde{\Gamma}(\bar{t}))\hat{G}(\bar{t})\|} \nonumber \\
&\quad {+}\frac{d\Delta \! \mbox{\calligra H}^{~~~\ast}}{d\bar{t}}\!(t)\tilde{\theta}(\bar{t}) {+}\frac{d \Delta}{d\bar{t}}(\bar{t})\,, \label{eq:dotHatGav_event_4_siso} \\
\frac{d\tilde{\theta}}{dt}(t)&=\frac{-K\left[\left(I_{n}+H^{\ast-1}\Delta \! \mbox{\calligra H}^{~~~\ast}\!(t)+\tilde{\Gamma}(t)H^{\ast} \right.\right. \left.\left.+~\tilde{\Gamma}(t)\Delta \! \mbox{\calligra H}^{~~~\ast}\!(t)\right)\tilde{\theta}(t)+(H^{\ast-1}+~\tilde{\Gamma}(t))\Delta(t)\right]}{\left\|\left(I_{n}+H^{\ast-1}\Delta \! \mbox{\calligra H}^{~~~\ast}\!(t)+\tilde{\Gamma}(t)H^{\ast} .+\tilde{\Gamma}(t)\Delta \! \mbox{\calligra H}^{~~~\ast}\!(t)\right)\tilde{\theta}(t)+(H^{\ast-1}+\tilde{\Gamma}(t))\Delta(t)\right\|}\label{eq:dtildeThetadt_20250529_2}\,,\\
    \frac{d\tilde{\Gamma}}{d\bar{t}}(\bar{t})&=\! {-}\frac{\omega_{r}}{\omega}\tilde{\Gamma}(\bar{t}) {-}\frac{\omega_{r}}{\omega}\tilde{\Gamma}(\bar{t})H^{\ast}\tilde{\Gamma}(\bar{t}) -\frac{\omega_{r}}{\omega}\tilde{\Gamma}(\bar{t})(\Delta~~~~\! \widehat{\mbox{\calligra \!\!\!\!\!\!\!\! H}}^{~~~\ast}(\bar{t}) {+}S^{\top}(\bar{t})H^{\ast}\tilde{\theta}(\bar{t})N(\bar{t}))H^{\ast {-}1} \nonumber \\
    &-\frac{\omega_{r}}{\omega}H^{\ast {-}1}(\Delta~~~~\! \widehat{\mbox{\calligra \!\!\!\!\!\!\!\! H}}^{~~~\ast}(\bar{t}) {+}S^{\top}(\bar{t})H^{\ast}\tilde{\theta}(\bar{t})N(\bar{t}))\tilde{\Gamma}(\bar{t}) -\frac{\omega_{r}}{\omega}\tilde{\Gamma}(\bar{t})(\Delta~~~~\! \widehat{\mbox{\calligra \!\!\!\!\!\!\!\! H}}^{~~~\ast}(\bar{t}) {+}S^{\top}(\bar{t})H^{\ast}\tilde{\theta}(\bar{t})N(\bar{t}))\tilde{\Gamma}(\bar{t}) \nonumber \\
     &\quad {-}\frac{\omega_{r}}{\omega}H^{\ast {-}1}(\Delta~~~~\! \widehat{\mbox{\calligra \!\!\!\!\!\!\!\! H}}^{~~~\ast}(\bar{t}) {+}S^{\top}(\bar{t})H^{\ast}\tilde{\theta}(\bar{t})N(\bar{t}))H^{\ast {-}1}\,, \label{eq:dotGamma_1_event_siso}
\end{align}
Now, by defining the augmented state
\begin{align}
X^{\top}(\bar{t}):=\begin{bmatrix}\hat{G}^{\top}(\bar{t})\,, \tilde{\theta}^{\top}(\bar{t})\,, \tilde{\Gamma}^{\top}(\bar{t}) 
\end{bmatrix}^{\top}\,,
\end{align}
one arrives at the dynamics
\begin{align}
\dfrac{dX(\bar{t})}{d\bar{t}}&=\dfrac{1}{\omega}\mathcal{F}\left(\bar{t},X,\dfrac{1}{\omega}\right)\,, \quad \label{eq:dotX_event_siso}
\mathcal{F}^{\top}=\begin{bmatrix} \mathcal{F}_{1}\,, \mathcal{F}_{2}\,, \mathcal{F}_{3} \end{bmatrix}\,.
\end{align}

\subsection{Average Closed-loop Sysem}

Due to the discontinuous nature of the proposed control strategy, the averaging theory for discontinuous systems is employed in the paper, according to \cite{P:1979}.

The augmented system (\ref{eq:dotX_event_siso}) has a small parameter $1/\omega$ as well as a $T$-periodic function $\mathcal{F}\left(\bar{t},X,\dfrac{1}{\omega}\right)$ in $\bar{t}$, hence it can be studied through the averaging method for stability analysis by averaging  $\mathcal{F}\left(\bar{t},X,\dfrac{1}{\omega}\right)$ at $\displaystyle \lim_{\omega\to \infty}\dfrac{1}{\omega}=0$, as shown in \cite{P:1979}, {\it i.e.},
\begin{align}
\dfrac{dX_{\text{av}}(\bar{t})}{d\bar{t}}&=\dfrac{1}{\omega}\mathcal{F}_{\text{av}}\left(X_{\text{av}}\right) \,, \label{eq:dotXav_event_1_siso} \\
\mathcal{F}_{\text{av}}\left(X_{\text{av}}\right)&=\dfrac{1}{T}\int_{0}^{T}\mathcal{F}\left(\delta,X_{\text{av}},0\right)d\delta
\,.  \label{eq:mathcalFav_event_siso}
\end{align}
Basically, the problem in the averaging method is to determine in what sense the behavior of the autonomous system (\ref{eq:dotXav_event_1_siso}) approximates the behavior of the nonautonomous system (\ref{eq:dotX_event_siso}) such that (\ref{eq:dotX_event_siso}) can be represented as a perturbed version of the system (\ref{eq:dotXav_event_1_siso}).

Therefore, treating the non-periodic states $\hat{G}(\bar{t})$, $\tilde{\theta}(\bar{t})$ and $\Gamma(\bar{t})$ as constants in (\ref{eq:dotHatGav_event_4_siso})--(\ref{eq:dotX_event_siso}), one gets the following average closed-loop system   
\begin{align}
    \frac{d\hat{G}_{\rm av}}{d\bar{t}}(\bar{t})=&-\frac{1}{\omega}H^{\ast}KH^{\ast -1}\frac{\hat{G}_{\rm av}(\bar{t})}{\|(H^{\ast -1}+\tilde{\Gamma}_{\rm av}(\bar{t}))\hat{G}_{\rm av}(\bar{t})\|} \label{eq:dhatGdt_newton2}\,, \\
\frac{d\tilde{\theta}_{\rm av}}{d\bar{t}}(\bar{t}) = & -\frac{1}{\omega}K\frac{(I_{n}+\tilde{\Gamma}(\bar{t})H^{\ast}) \tilde{\theta}_{\rm av}(\bar{t})}{\|(I_{n}+\tilde{\Gamma}(\bar{t})H^{\ast}) \tilde{\theta}_{\rm av}(\bar{t})\|}\label{eq:dtildeThetadt_newton2}\,,\\
\frac{d\tilde{\Gamma}_{\rm av}}{d\bar{t}}(\bar{t}) = & ~-\frac{\omega_{r}}{\omega}\tilde{\Gamma}_{\rm av}(\bar{t}) - \frac{\omega_{r}}{\omega}\tilde{\Gamma}_{\rm av}(\bar{t})H^{\ast}\tilde{\Gamma}_{\rm av}(\bar{t})  \label{eq:dtilde_gamma_newton2}\,.
\end{align}

The system (\ref{eq:dhatGdt_newton2})--(\ref{eq:dtilde_gamma_newton2}) is nonlinear due the quadratic terms $\tilde{\Gamma}_{\rm{av}}(\bar{t})\hat{G}_{\rm{av}}(\bar{t})$, $\tilde{\Gamma}_{\rm av}(\bar{t})H^{\ast}\tilde{\theta}_{\rm av}(\bar{t})$ and $\tilde{\Gamma}_{\rm av}(\bar{t})H^{\ast}\tilde{\Gamma}_{\rm av}(\bar{t})$. To analyze its stability properties, we provide a linearization to approximate of the system's behavior in the vicinity of origin. In this sense, for small values of $\hat{G}_{\rm{av}}(0)$, $\tilde{\theta}_{\rm{av}}(0)$ and $\tilde{\Gamma}_{\rm{av}}\left(0\right)$, the quadratic terms are negligible in the linearized corresponding system such that the system (\ref{eq:dhatGdt_newton2})--(\ref{eq:dtilde_gamma_newton2}) can be rewritten as
\begin{align}
    \frac{d\hat{G}_{\rm av}}{d\bar{t}}(\bar{t})=&-\frac{1}{\omega}H^{\ast}KH^{\ast -1}\frac{\hat{G}_{\rm av}(\bar{t})}{\|H^{\ast -1}\hat{G}_{\rm av}(\bar{t})\|}  \label{eq:dhatGdt_newton3} \,, \\
    \frac{d\tilde{\theta}_{\rm av}}{d\bar{t}}(\bar{t}) = & -\frac{1}{\omega}K\frac{ \tilde{\theta}_{\rm av}(\bar{t})}{\|\tilde{\theta}_{\rm av}(\bar{t})\|} \label{eq:dtilde_theta_newton3}\,,\\
    \frac{d\tilde{\Gamma}_{\rm av}}{d\bar{t}}(\bar{t}) = & ~-\frac{\omega_{r}}{\omega}\tilde{\Gamma}_{\rm av}(\bar{t})\label{eq:dtilde_gamma_newton3}\,.
\end{align}

\subsection{Stability Analysis}
The next theorem guarantees the finite-time stability of the proposed Newton-ESC with unit vector, as depicted in Fig.~\ref{fig:BD_newton_ESC_UVC}. 

\begin{theo}
Consider the closed-loop average dynamics of the gradient estimate (\ref{eq:dhatGdt_newton3}) and that Assumption 1 holds. For a sufficiently large $\omega > 0$, defined in (\ref{eq:omega_scaled}), there exist a finite-time $t_{S} \in \left(0,\frac{2\lambda_{max}(P)}{\lambda_{min}(Q)}\|\tilde{\theta}_{\rm av}(0)\|\right]$, such
that the sliding motion on $\hat{G}_{\rm av}(t) \equiv 0$ occurs. Furthermore, for the non-average system (\ref{eq:gradient_dhatGdt_20250206_1}), the input converge to an error of order $\mathcal{O}(a + \frac{1}{\omega})$ 
\begin{align}
  \| \theta(t) - \theta^* \|& \leq  \left\{
  \begin{array}{ll}
   \sqrt{\frac{\lambda_{\max}(P)}{\lambda_{\min}(P)}} \| \theta(0)  {-} \theta^* \| - \dfrac{1}{2} \frac{\lambda_{\min}(Q)}{\sqrt{\lambda_{\max}(P)}\sqrt{\lambda_{\min}(P)}} t + \mathcal{O} \left( a + \frac{1}{\omega} \right), & \forall t \in [0, t_{S}), \\
  \mathcal{O} \left( a + \frac{1}{\omega} \right), & \forall t \in [t_{S}, +\infty).
  \end{array}
  \right. \label{eq:theta_minimal_v2}
\end{align}
and for the output,
\begin{align}
 |y(t) - Q^{\ast}| &\leq  \left\{
  \begin{array}{ll}
2\|H^{\ast}\|\left[\left(\sqrt{\frac{\lambda_{\max}(P)}{\lambda_{\min}(P)}} \| \theta(0) - \theta^* \| 
        - \frac{1}{2} \frac{\lambda_{\min}(Q)}{
        \sqrt{\lambda_{\max}(P)} \sqrt{\lambda_{\min}(P)}} t \right)^{2} + \mathcal{O}\left(a^{2} + \frac{1}{\omega^{2}}\right)\right]\,, &\forall t \in [0, t_{S})\,, \\
     \mathcal{O}\left(a^{2} + \frac{1}{\omega^{2}}\right) \,, &\forall \in [t_{S}, +\infty)\\
  \end{array}
  \right. \label{eq:output_minimal_v2}
\end{align}
\end{theo}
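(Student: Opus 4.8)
The plan is to mirror the Lyapunov/comparison-lemma argument of Theorem~1, but to carry it out on the Hessian-independent $\tilde{\theta}_{\rm av}$-dynamics (\ref{eq:dtilde_theta_newton3}) rather than on the gradient estimate, the two being equivalent through $\hat{G}_{\rm av}(\bar{t})=H^{\ast}\tilde{\theta}_{\rm av}(\bar{t})$ with $H^{\ast}$ invertible, so that sliding on $\hat{G}_{\rm av}\equiv 0$ coincides with $\tilde{\theta}_{\rm av}\equiv 0$. First I would record that, after the linearization about the origin described before (\ref{eq:dhatGdt_newton3}), the $\tilde{\Gamma}_{\rm av}$-subsystem (\ref{eq:dtilde_gamma_newton3}) decouples and decays exponentially at rate $\omega_{r}/\omega$; this justifies replacing the full average system (\ref{eq:dhatGdt_newton2})--(\ref{eq:dtilde_gamma_newton2}) by (\ref{eq:dhatGdt_newton3})--(\ref{eq:dtilde_gamma_newton3}) for small initial data, and it is precisely the vanishing of $\tilde{\Gamma}_{\rm av}$ that collapses the effective gain in (\ref{eq:dtildeThetadt_newton2}) from $K(I_{n}+\tilde{\Gamma}_{\rm av}H^{\ast})$ down to $K$ alone.

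Next I would introduce $V_{\rm av}(\bar{t})=\tilde{\theta}_{\rm av}^{\top}(\bar{t})P\tilde{\theta}_{\rm av}(\bar{t})$ with $P=P^{\top}>0$ solving the Lyapunov equation $K^{\top}P+PK=Q$ for a chosen $Q=Q^{\top}>0$, which is well posed since $-K$ is Hurwitz by design of $K$. Differentiating along (\ref{eq:dtilde_theta_newton3}) gives $\dot{V}_{\rm av}=-\frac{1}{\omega}\,\tilde{\theta}_{\rm av}^{\top}Q\,\tilde{\theta}_{\rm av}/\|\tilde{\theta}_{\rm av}\|$, and the Rayleigh--Ritz bounds (\ref{rayleigh-ritz_Q})--(\ref{rayleigh-ritz_P}), applied exactly as in (\ref{eq:lyapunov_derivative2})--(\ref{eq:lyapunov_derivative3}), yield $\dot{V}_{\rm av}\le-\frac{1}{\omega}\lambda_{\min}(Q)\lambda_{\max}(P)^{-1/2}\sqrt{V_{\rm av}}$. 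The decisive feature here---and the reason the final bounds carry no $\|H^{\ast}\|\|H^{\ast-1}\|$ factor, in contrast to (\ref{eq:gradient_theta_diff})---is that the contraction gain is $K$ alone, the Hessian having been canceled by the Riccati estimator. Invoking the Comparison Lemma as in (\ref{eq:comparison_lemma})--(\ref{eq:Vav_solution}) then makes $\sqrt{V_{\rm av}}$ decrease linearly in $\bar{t}$, whence $\|\tilde{\theta}_{\rm av}(\bar{t})\|\le\sqrt{\lambda_{\max}(P)/\lambda_{\min}(P)}\,\|\tilde{\theta}_{\rm av}(0)\|-\frac{1}{2\omega}\lambda_{\min}(Q)(\lambda_{\max}(P)\lambda_{\min}(P))^{-1/2}\bar{t}$ and a finite sliding time $\bar{t}_{S}$; dividing by $\omega$ to return to the original clock produces $t_{S}\in(0,\,2\lambda_{\max}(P)\lambda_{\min}(Q)^{-1}\|\tilde{\theta}_{\rm av}(0)\|]$, free of $\omega$.

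I would then transfer this conclusion to the non-averaged, discontinuous, $T$-periodic system exactly as in (\ref{eq:tilde_theta_theta_av})--(\ref{eq:gradient_theta_diff}): the averaging theorem for discontinuous right-hand sides \cite{P:1979} gives $\|\tilde{\theta}(\bar{t})-\tilde{\theta}_{\rm av}(\bar{t})\|=\mathcal{O}(1/\omega)$, the triangle inequality absorbs this into the bound, and the identity $\theta(t)-\theta^{\ast}=\tilde{\theta}(t)+S(t)$ with $\|S(t)\|=\mathcal{O}(a)$ upgrades the error to the $\mathcal{O}(a+1/\omega)$ residual claimed in (\ref{eq:theta_minimal_v2}). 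The output bound (\ref{eq:output_minimal_v2}) then follows from $|y(t)-Q^{\ast}|\le\|H^{\ast}\|\,\|\theta(t)-\theta^{\ast}\|^{2}$ as in (\ref{eq:gradient_norm_y}), squaring the $\theta$-estimate and using $(x+\varepsilon)^{2}\le 2x^{2}+2\varepsilon^{2}$ to split off the $\mathcal{O}(a^{2}+1/\omega^{2})$ term.

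The main obstacle I anticipate is not the Lyapunov computation---which is essentially a transcription of Theorem~1 with $KH^{\ast}$ replaced by $K$---but the rigorous justification of the two approximations stacked near the equilibrium: first, that discarding the quadratic coupling terms $\tilde{\Gamma}_{\rm av}\hat{G}_{\rm av}$, $\tilde{\Gamma}_{\rm av}H^{\ast}\tilde{\theta}_{\rm av}$ and $\tilde{\Gamma}_{\rm av}H^{\ast}\tilde{\Gamma}_{\rm av}$ is legitimate uniformly over the finite horizon $[0,\bar{t}_{S}]$, and second, that the averaging estimate of \cite{P:1979} remains valid as the trajectory approaches the sliding surface where the unit-vector field is discontinuous. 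Care is needed because the finite-time estimate is exercised precisely where both the linearization error (through the still-decaying $\tilde{\Gamma}_{\rm av}$) and the discontinuity are most delicate; the argument must confirm that $\tilde{\Gamma}_{\rm av}$ has decayed enough, relative to $1/\omega$ and to the approach rate of $\tilde{\theta}_{\rm av}$ toward the origin, that its feedback into the $\tilde{\theta}_{\rm av}$-channel is a genuine higher-order perturbation rather than an $\mathcal{O}(1)$ effect.
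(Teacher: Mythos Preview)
Your proposal is correct and follows essentially the same route as the paper: the proof there also takes $V_{\rm av}=\tilde{\theta}_{\rm av}^{\top}P\tilde{\theta}_{\rm av}$ with $(-K)^{\top}P+P(-K)=-Q$, differentiates along (\ref{eq:dtilde_theta_newton3}), applies Rayleigh--Ritz and the Comparison Lemma to obtain the linear decrease of $\sqrt{V_{\rm av}}$ and the finite $t_{S}$, then invokes \cite{P:1979}, the triangle inequality with $S(t)$, and the Cauchy--Schwarz bound $|y-Q^{\ast}|\le\|H^{\ast}\|\|\theta-\theta^{\ast}\|^{2}$ together with $(x+\varepsilon)^{2}\le 2x^{2}+2\varepsilon^{2}$, while treating (\ref{eq:dtilde_gamma_newton3}) separately as an exponentially decaying subsystem. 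The concerns you raise about the linearization and the averaging near the discontinuity are not addressed with any additional rigor in the paper either, so your plan matches what is actually proved.
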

\begin{proof}
From a Lyapunov function candidate,
\begin{align}
    V_{\rm av}(\bar{t}) = \tilde{\theta}^{\top}_{\rm av}(\bar{t})P\tilde{\theta}_{\rm av}, \quad P = P^{\top} > 0 \label{eq:lyapnuv_tildetheta}\,,
\end{align}
the gain \textcolor{black}{$-K$ is Hurwitz}, by a given $Q = Q^{\top} > 0$, exist $ P = P^{\top}$, a symmetric matrix, such that the Lyapunov's function is $(-K)^{\top}P + P(-K) = -Q$. Thus, through a time scaling and time derivative of (\ref{eq:lyapnuv_tildetheta}) is 
\begin{align}
    \frac{dV_{\rm av}(\bar{t})}{d\bar{t}} & = ~ \dot{\tilde{\theta}}^{\top}_{\rm av}(\bar{t})P\tilde{\theta}_{\rm av} + \tilde{\theta}^{\top}_{\rm av}(\bar{t})P\dot{\tilde{\theta}}_{\rm av}\,, \nonumber\\
    & = ~ \frac{1}{\omega}\frac{\tilde{\theta}^{\top}_{\rm av}(\bar{t})(-K)^{\top}P\tilde{\theta}_{\rm av}(\bar{t})}{\|\tilde{\theta}_{\rm av}(\bar{t})\|} + \frac{1}{\omega}\frac{\tilde{\theta}^{\top}_{\rm av}(\bar{t})P(-K)\tilde{\theta}_{\rm av}(\bar{t})}{\|\tilde{\theta}_{\rm av}(\bar{t})\|}\,,\\
    & = ~ \frac{1}{\omega}\frac{\tilde{\theta}^{\top}_{\rm av}(\bar{t})(-K)^{\top}P + P(-K)\tilde{\theta}_{\rm av}(\bar{t})}{\|\tilde{\theta}_{\rm av}(\bar{t})\|}\,,\\
    & = ~ -\frac{1}{\omega}\frac{\tilde{\theta}^{\top}_{\rm av}(\bar{t})Q\tilde{\theta}_{\rm av}(\bar{t})}{\|\tilde{\theta}_{\rm av}(\bar{t})\|} \label{eq:lyapunov_tildetheta_q} \,.
\end{align}
Now applying the Rayleigh-Ritz inequality \cite{K:2002}
\begin{align}
    \lambda_{min}(Q)||\tilde{\theta}_{\rm av}(t)\|^{2} \leq V_{\rm av}(t) \leq \lambda_{max}(Q)||\tilde{\theta}_{\rm av}(t)\|^{2}, \label{eq:tildetheta_Q_newton}
\end{align}
the Lyapunov candidate derivate (\ref{eq:lyapunov_tildetheta_q}) is
\begin{align}
    \frac{dV_{\rm av}(\bar{t})}{d\bar{t}} \leq -\frac{1}{\omega}\lambda_{min}(Q)\|\tilde{\theta}_{\rm av}(\bar{t})\|\,. \label{eq:dvav_lambdaQ}
\end{align}
Now, by using the Rayleigh-Ritz inequality \cite{K:2002} to the P matrix, 
\begin{align}
    \lambda_{\min}(P)\|\tilde{\theta}_{\rm av}(\bar{t})\|^{2}\leq V_{\rm av}(\bar{t}) \leq \lambda_{\max}(P)\|\tilde{\theta}_{\rm av}(\bar{t})\|^{2}\,,\label{eq:rayleigh-ritz_newton_P}
\end{align}
equation (\ref{eq:dvav_lambdaQ}) is
\begin{align}
    \frac{dV_{\rm av}(\bar{t})}{d\bar{t}} \leq -\frac{1}{\omega}\frac{\lambda_{\min}(Q)}{\sqrt{\lambda_{\max}(P)}}\sqrt{V_{\rm av}(\bar{t})}\,.
    \label{eq:lyapunov_derivative_3}
\end{align}
To solve this inequality, the Comparison Lemma \cite{K:2002} is used,
\begin{align}
    V_{\rm av}(t) &\leq \bar{V}_{\rm av}(\bar{t})\,,~\forall \bar{t} \geq 0\,,\label{eq:comparison_lemma_2}
\end{align}
results, 
\begin{align}
    \frac{d\bar{V}_{\rm av}(\bar{t})}{d\bar{t}} & = -\frac{1}{\omega}\frac{\lambda_{\min}(Q)}{\sqrt{\lambda_{\max}(P)}}\sqrt{\bar{V}_{\rm av}(\bar{t})}\,, \quad \bar{V}_{\rm av}(0)=V_{\rm av}(0)\,. \label{eq:barV_v1}
\end{align}
The solution of (\ref{eq:barV_v1}) with the initial conditions $\bar{V}_{\rm av}(0)$ is given by
\begin{align}
    \sqrt{\bar{V}_{\rm av}(\bar{t})} = \sqrt{\bar{V}_{\rm av}(0)}  -\frac{1}{2\omega}\frac{\lambda_{\min}(Q)}{\sqrt{\lambda_{\max}(P)}}\bar{t}\,.
    \label{eq:Vav_newton_solution}
\end{align}
By using (\ref{eq:rayleigh-ritz_newton_P}), the equation (\ref{eq:Vav_newton_solution}) can be upper bounded by,
\begin{align}
    \sqrt{\lambda_{\min}(P)}\|\tilde{\theta}_{\rm av}(\bar{t})\| \leq &
    \sqrt{\lambda_{\max}(P)}\|\tilde{\theta}_{\rm av}(0)\| - \frac{1}{2\omega}\frac{\lambda_{\min}(Q)}{\sqrt{\lambda_{\max}(P)}}\bar{t}\,,
    \end{align}
thus,
    \begin{align}
    \|\tilde{\theta}_{\rm av}(\bar{t})\| \leq & \sqrt{\frac{\lambda_{max}(P)}{\lambda_{min}(P)}}\|\tilde{\theta}_{\rm av}(0)\| - \frac{1}{2\omega}\frac{\lambda_{\min}(Q)}{\sqrt{\lambda_{\max}(P)}\sqrt{\lambda_{\min}(P)}}\bar{t}\,. \label{eq:thetaav_newton_solution}
\end{align}
From (\ref{eq:thetaav_newton_solution}), there exist a finite-time $\bar{t}_{S} \in  \left(0,\frac{2\omega\lambda_{max}(P)}{\lambda_{min}(Q)}\|\tilde{\theta}_{\rm av}(0)\|\right]$, such that the sliding motion occurs, 
\begin{align}
    t_{S} \in \left(0,~2\frac{\lambda_{max}(P)}{\lambda_{min}(Q)} \|\theta(0) - \theta^{*}\|\right]\,. 
    \label{eq:time_ts_newton}
\end{align}
Since (\ref{eq:gradient_dhatGdt_20250206_1}) has a discontinuous right-hand side, but is also $T$-periodic in $t$, and noting that the average system with state 
$\tilde{\theta}_{\rm av}(\bar{t})$ is finite-time stable according to (\ref{eq:thetaav_newton_solution}) and (\ref{eq:time_ts_newton}), we can invoke the averaging theorem in~\cite{P:1979} to conclude that
\begin{align}
    \|\tilde{\theta}(t) - \tilde{\theta}_{\rm av}(t)\| \leq \mathcal{O}\left(\frac{1}{\omega}\right). \label{eq:tilde_theta_theta_av2}
\end{align}
By applying the triangle inequality \cite{A:1957}, we also obtain:
\begin{align}
\|\tilde{\theta}(t)\| 
&\leq \|\tilde{\theta}_{\rm av}(t)\| + \mathcal{O}\left(\frac{1}{\omega} \right)
\leq 
\begin{cases}
\sqrt{\dfrac{\lambda_{\max}(P)}{\lambda_{\min}(P)}} \|\tilde{\theta}_{\rm av}(0)\| 
- \dfrac{1}{2} \dfrac{ \lambda_{\min}(Q)}{\sqrt{\lambda_{\max}(P)} \sqrt{\lambda_{\min}(P)}} t  + \mathcal{O}\left(\frac{1}{\omega} \right) 
& \forall t \in [0, t_{S}), \\
  \mathcal{O}\left(\frac{1}{\omega} \right)\,, &\forall t \in [t_{S}, \infty). \label{eq:normTildeTheta}
\end{cases}
\end{align}
Now, backing to the non-average system, from (\ref{eq:S_v1}) and from Fig. ~\ref{fig:BD_ESC_UVC}, we can verify that,
\begin{align}
    \theta(t) - \theta^{\ast} = \tilde{\theta}(t) + S(t)\,,
    \label{eq:theta_fig2}
\end{align}
whose Euclidian norm satisfies,
\begin{align}
    \| \theta(t) - \theta^* \| &= \| \tilde{\theta}(t) + S(t) \| \leq \| \tilde{\theta}(t) \| + \| S(t) \| \nonumber \\
    &\leq
    \begin{cases} 
    \sqrt{\dfrac{\lambda_{\max}(P)}{\lambda_{\min}(P)}} \|\theta(0) - \theta^{\ast}\|
- \dfrac{1}{2} \dfrac{ \lambda_{\min}(Q)}{\sqrt{\lambda_{\max}(P)} \sqrt{\lambda_{\min}(P)}} \, t + ~\mathcal{O}\left(a+ \dfrac{1}{\omega}\right), 
& \forall t \in [0, t_{S}),\\
         ~\mathcal{O}\left(a+ \dfrac{1}{\omega}\right), ~~ &\forall t \in [t_{S}, +\infty).
    \end{cases}
\label{eq:gradient_theta_diff_20250701}
\end{align}
From (\ref{eq:gradient_theta_diff_20250701}), the control strategy (\ref{eq:uNewton}), influenced by the Hessian, $H^{\ast}$, allows the convergence to the residual set $\mathcal{O}\left(a + \frac{1}{\omega}\right)$ in a finite-time, within the interval  
\begin{align}
    t_{S} \in \left(0,~2\frac{\lambda_{max}(P)}{\lambda_{min}(Q)} \|\theta(0) - \theta^{*}\|\right] \,.
\end{align}
From (\ref{eq:y_v2}) and the Cauchy-Schwarz inequality \cite{S:2008}, it is possible to write 
\begin{align}
    |y(t) - Q^{*}|=|(\theta(t) - \theta^{\ast})^{T}H^{\ast}(\theta(t) - \theta^{\ast})|\leq \|H^{\ast}\|\|\theta(t) - \theta^{*}\|^{2}\,,
    \label{eq:gradient_norm_y_20250701}
\end{align}
and substituting (\ref{eq:gradient_theta_diff_20250701}) in (\ref{eq:gradient_norm_y_20250701}), lead to the following upper bound
\begin{align}
|y(t) - Q^{*}| & 
\leq \left\{
\begin{array}{ll}
\|H^{\ast}\|\left[
     \sqrt{\frac{\lambda_{\max}(P)}{\lambda_{\min}(P)}} \| \theta(0) - \theta^* \| - \frac{1}{2} \frac{ \lambda_{\min}(Q)}{
    \sqrt{\lambda_{\max}(P)} \sqrt{\lambda_{\min}(P)}} t 
    + \mathcal{O}\left(a + \frac{1}{\omega}\right)
\right]^2\,, &\forall t \in (0, t_{S}], \\
      \mathcal{O}\left(a+ \frac{1}{\omega}\right)^{2}, & \forall t \in (t_{S}, +\infty) .\label{eq:gradient_y_norm_v32_20250701} \\
  \end{array}\right. \\
& \leq \left\{
  \begin{array}{ll}
2\|H^{\ast}\|\left(\sqrt{\frac{\lambda_{\max}(P)}{\lambda_{\min}(P)}} \| \theta(0) - \theta^* \| 
        - \frac{1}{2} \frac{\lambda_{\min}(Q)}{
        \sqrt{\lambda_{\max}(P)} \sqrt{\lambda_{\min}(P)}} t \right)^{2} + \mathcal{O}\left(a^{2} + \frac{1}{\omega^{2}}\right),\hspace{-0.65cm} &\forall t \in (0, t_{S}],\\
      \mathcal{O}\left(a^{2} + \frac{1}{\omega^{2}}\right) ,\hspace{-0.65cm} & \forall t \in (t_{S}, +\infty) .\label{eq:gradient_y_norm_v3_20250701}
  \end{array}\right.
\end{align}
From (\ref{eq:gradient_y_norm_v3}), the convergence of $|y(t) - Q^*|$ to the residual set $\mathcal{O}\left(a^2 + \frac{1}{\omega^{2}}\right)$ occurs for $t > t_{S}$. Moreover, the solution of (\ref{eq:dtilde_gamma_newton3}) is
\begin{align}
    \|\tilde{\Gamma}_{\rm av}(\bar{t})\| = \exp(-\frac{\omega_{r}}{\omega}\bar{t})\|\tilde{\Gamma}_{\rm av}(0)\|\,, \quad \forall \bar{t} \geq 0\,.\label{eq:norm_tildetheta}
\end{align}
Since (\ref{eq:norm_tildetheta}) is $T$-periodic in $t$, $1/\omega$ is a positive small parameter, the origin $\tilde{\Gamma}_{\rm{av}}=0$ is at least an exponentially stable equilibrium point of the closed-loop. Then, by invoking  \cite{P:1979}, there exists an upper bound for (\ref{eq:norm_tildetheta}) such that
\begin{align}
\|\Gamma(t)-H^{\ast-1}\|&\leq\exp\left(-\omega_{r} t\right)\|\Gamma(0)-H^{\ast-1}\|+\mathcal{O}\left(a+\frac{1}{\omega}\right)\,. \label{eq:norm_tildegamma_mimo}
\end{align}
Then, the proof is completed. \hfill$\square$
\end{proof}
\section{Simulation Results}

To illustrate the results between the unit vector gradient-based and unit vector Newton-based extremum seeking methods, the following parameters were used \cite{GKN:2012}: $\delta = 0.1$, $\omega$ = 0.1 rad/s, $\omega_{1}$ = $70\omega$, $\omega_{2}$ = $50\omega$, $\omega'_{L}$ = 10, $\omega'_{H}$ = 8, $\omega'_{R}$ = 10, $a = [0.1 ~0.1]^{\top}$, $K^{''}_{g}$ = $10^{-4}$diag([-250 ~-250]), $K^{''}_{n}$ = diag([1~1]), $\Gamma^{-1}_{0}$ = 400diag([1~1]), $\tilde{\theta}_{0}$ = $[2.5~5]^{\top}$, $Q^{\ast}$ = 100, $\theta^{\ast} = [2~4]^{\top}$, \textcolor{black}{$H^{\ast}_{11} = 100, H^{\ast}_{12} = H^{\ast}_{21}= 30$ and $H^{\ast}_{22} = 20$.} Fig. \ref{fig:uvc} and Fig. \ref{fig:uvc_newton}~ show the simulation of the gradient-based method and Newton-based method to two variables over 0 to 1000 seconds, respectively. The dotted line represents the convergence value. Fig. \ref{fig:uvc}(a) illustrate the control action, having low convergence rate and around to 600 seconds the sliding takes place. In Fig. \ref{fig:uvc}(b) depicts the evolution of time to estimate the maximum output $Q^{\ast} = 100$. The gradient estimate $\hat{G}_{1}(t)$ (blue line) converges faster than $\hat{G}_{2}(t)$ (red line) around 400 seconds and 600 seconds, respectively, as illustrated in Fig. \ref{fig:uvc}(c). So as the variable $\theta(t)$, converging to $\theta_1^*=4$ (red line) and $\theta_2^*=2$ (blue line) simultaneously approximately 600 seconds. Now, for the Newton-based method, the convergence rate is faster than the gradient-based. As illustrated in Fig. \ref{fig:uvc_newton}(a), the sliding takes place around 170 seconds, while the maximum output was around 150 seconds, as shown in Fig. \ref{fig:uvc_newton}(b). Fig.~\ref{fig:uvc_newton}(e) illustrates the inverse Hessian elements and their convergence to the values $H^{\ast -1}_{11}$ = 0.0182, $H^{\ast -1}_{22}$ = 0.0909 and $H^{\ast -1}_{12}$ = $H^{\ast -1}_{22}$ = -0.0273 around 100 seconds, before $\hat{G}(t)$ and $\theta(t)$, as depicted on Fig. \ref{fig:uvc_newton}(c) and Fig. \ref{fig:uvc_newton}(d).
\begin{figure*}[!ht]
	\centering
	\subfigure[Control law $u(t)$. \label{fig:u_uvc}]{\includegraphics[width=5.5cm]{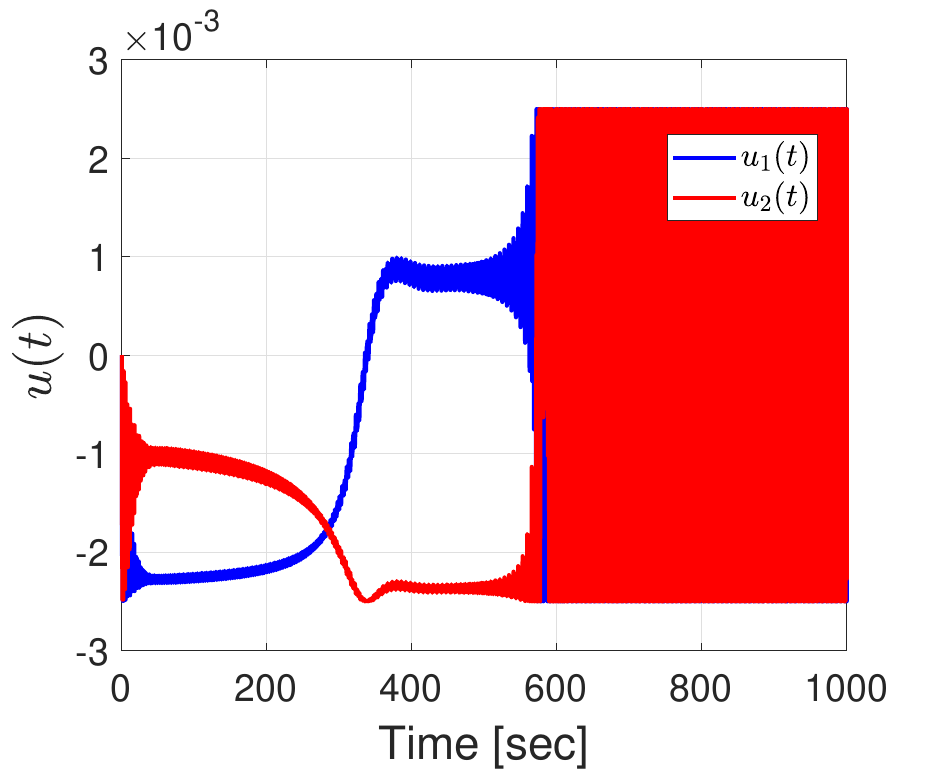}}
	\subfigure[Output $y(t)$. \label{fig:y_uvc}]{\includegraphics[width=5.5cm]{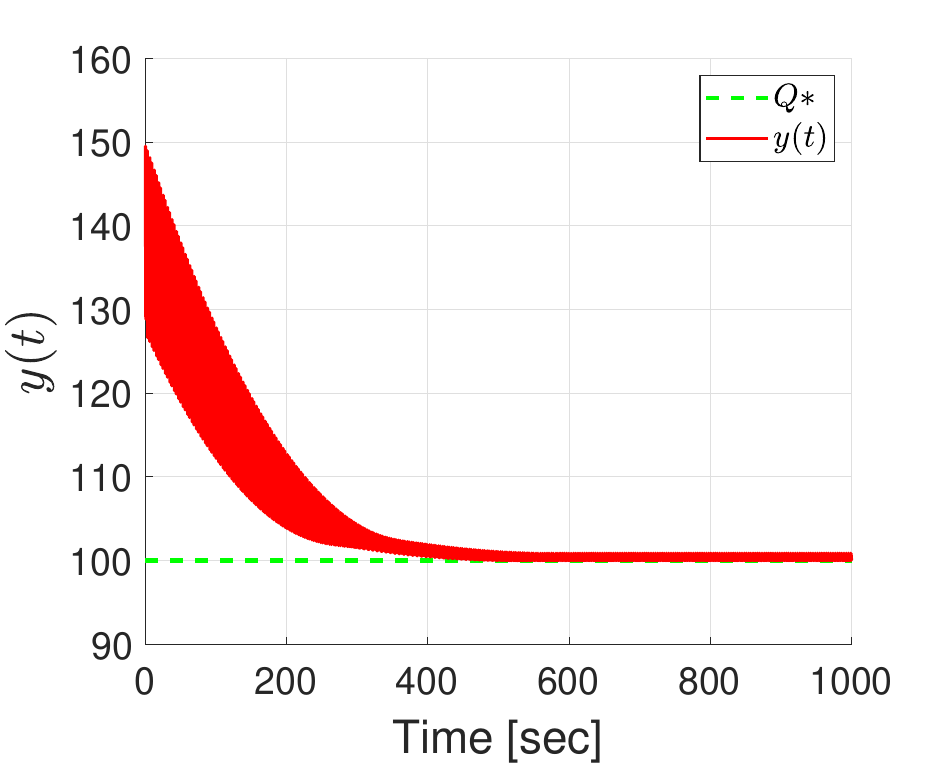}}
		\\
		\subfigure[Gradient estimate $\hat{G}(t)$. \label{fig:ghat_uvc}]{\includegraphics[width=5.5cm]{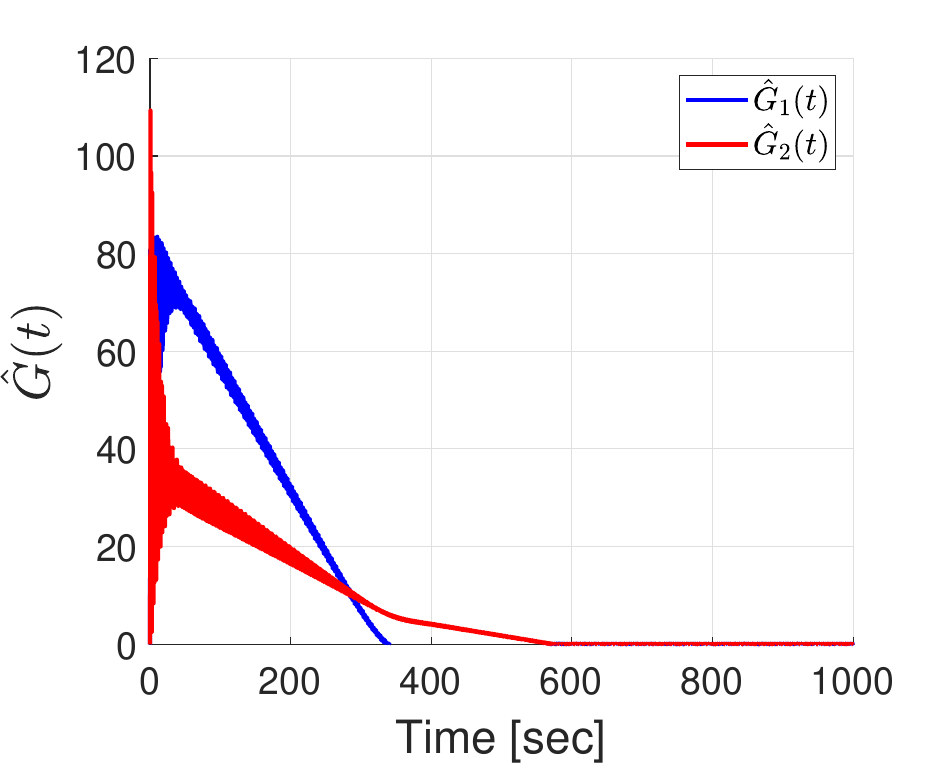}}
		\subfigure[Input $\theta(t)$. \label{fig:theta_uvc}]{\includegraphics[width=5.5cm]{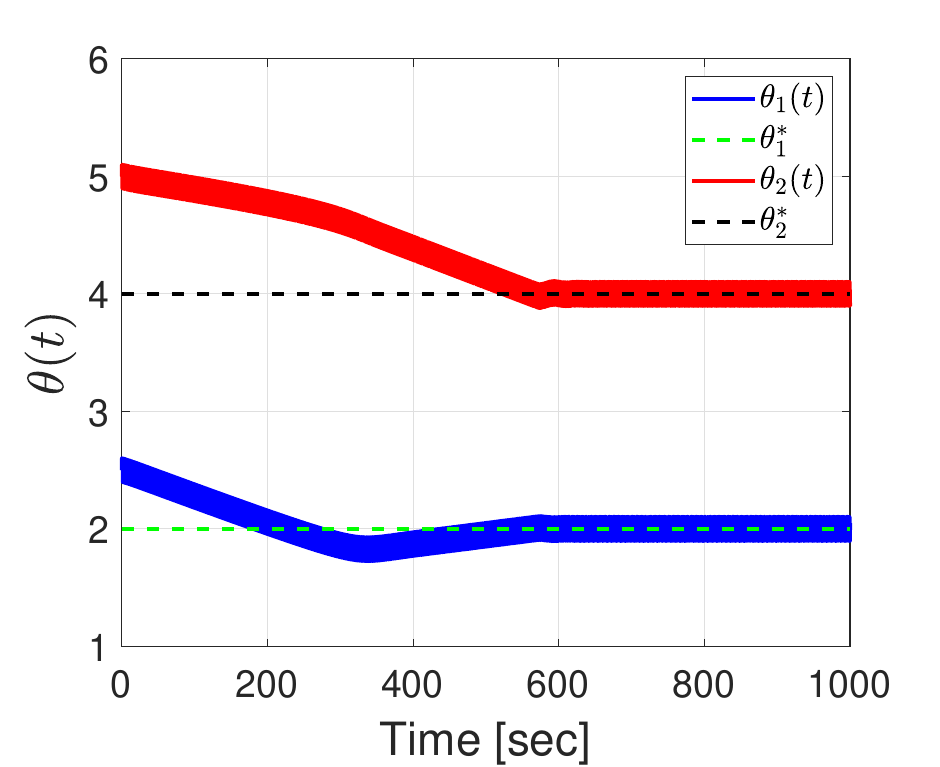}}
    \caption{The simulation variables are: (a) control signal $u(t)$, (b) output dsignal $y(t)$, (c) gradient estimate $\hat{G}(t)$, and (d) input signal $\theta(t)$.}
    \label{fig:uvc}
\end{figure*}
\begin{figure*}[!ht]
	\centering
	\subfigure[Control law $u(t)$. \label{fig:u_uvc_newton}]{\includegraphics[width=5.5cm]{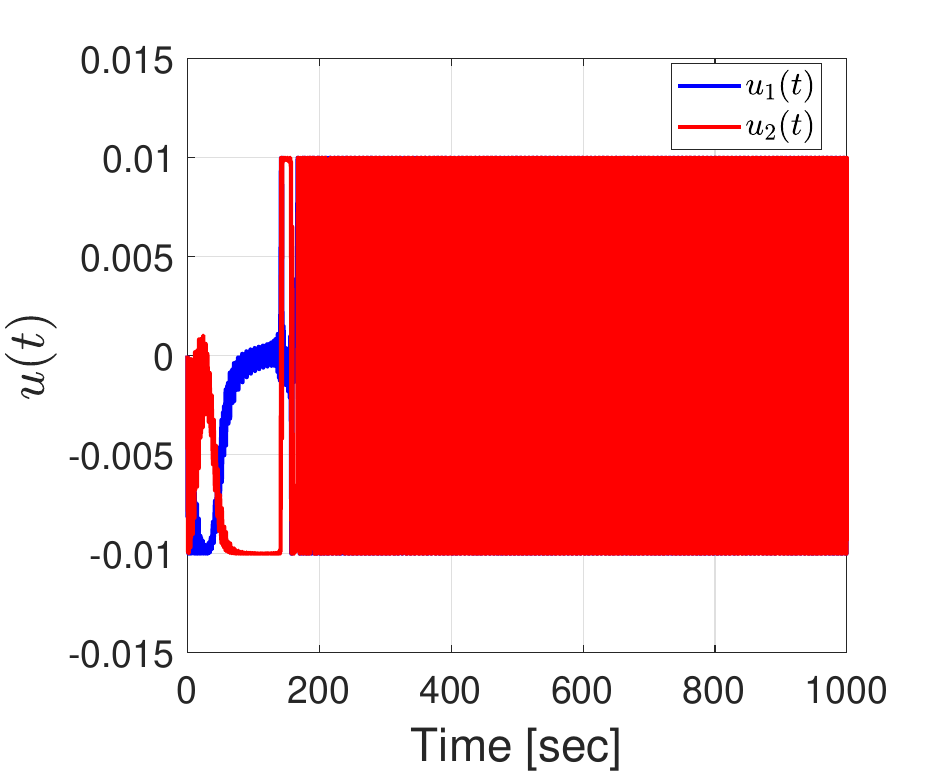}}
	\subfigure[Output $y(t)$. \label{fig:y_uvc_newton}]{\includegraphics[width=5.5cm]{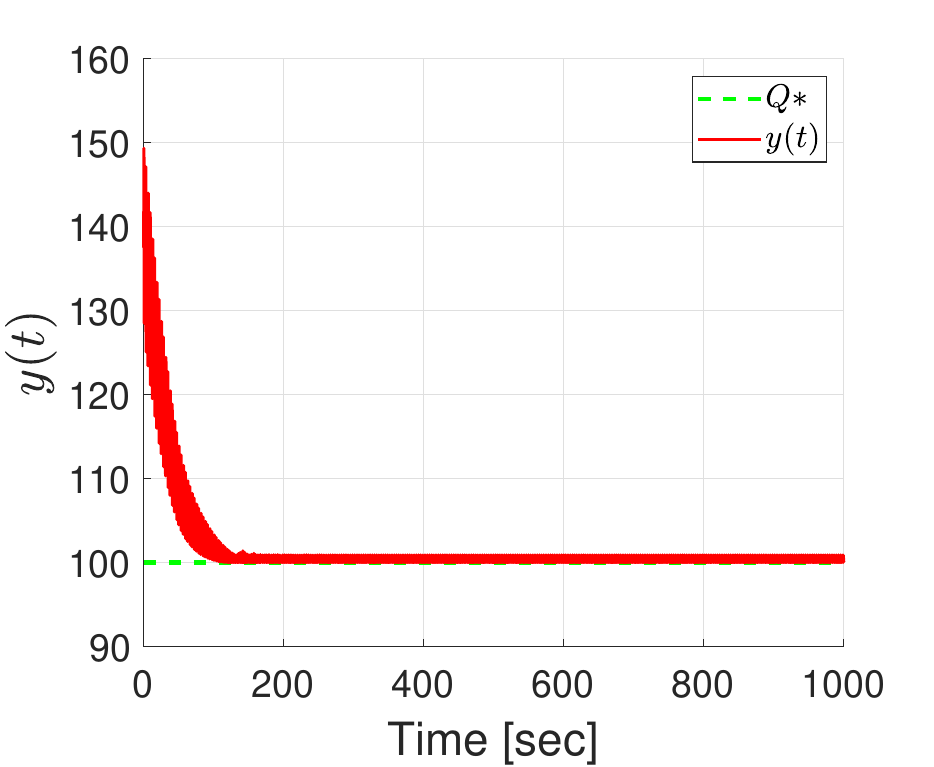}}
		\\
		\subfigure[Gradient estimate $\hat{G}(t)$. \label{fig:ghat_uvc_newton}]{\includegraphics[width=5.5cm]{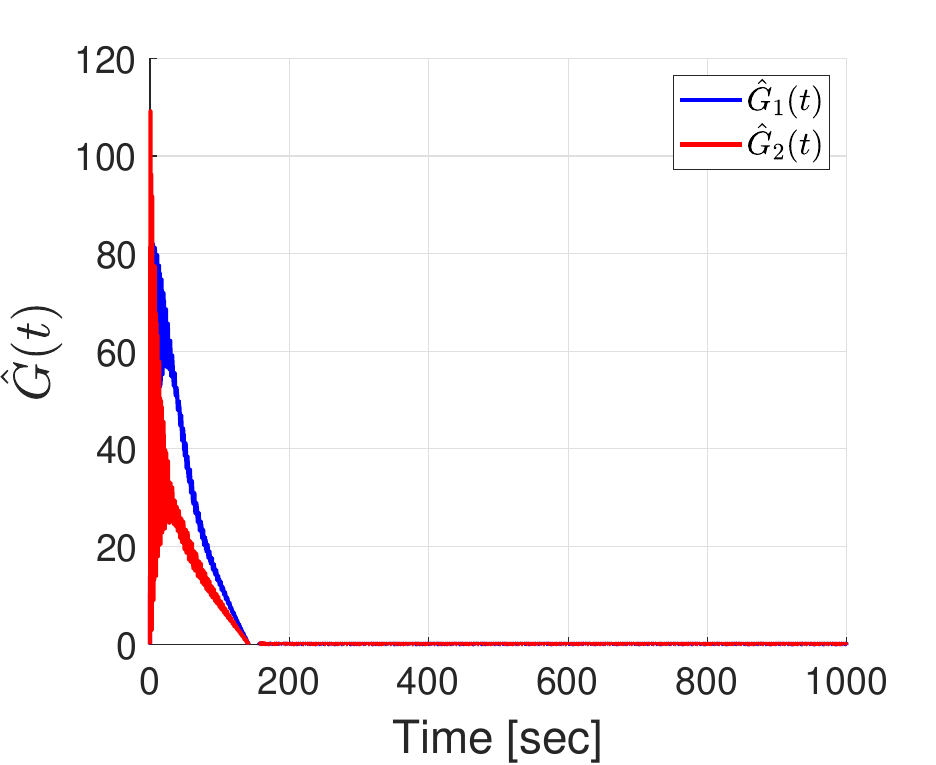}}
		\subfigure[Input $\theta(t)$. \label{fig:theta_uvc_newton}]{\includegraphics[width=5.5cm]{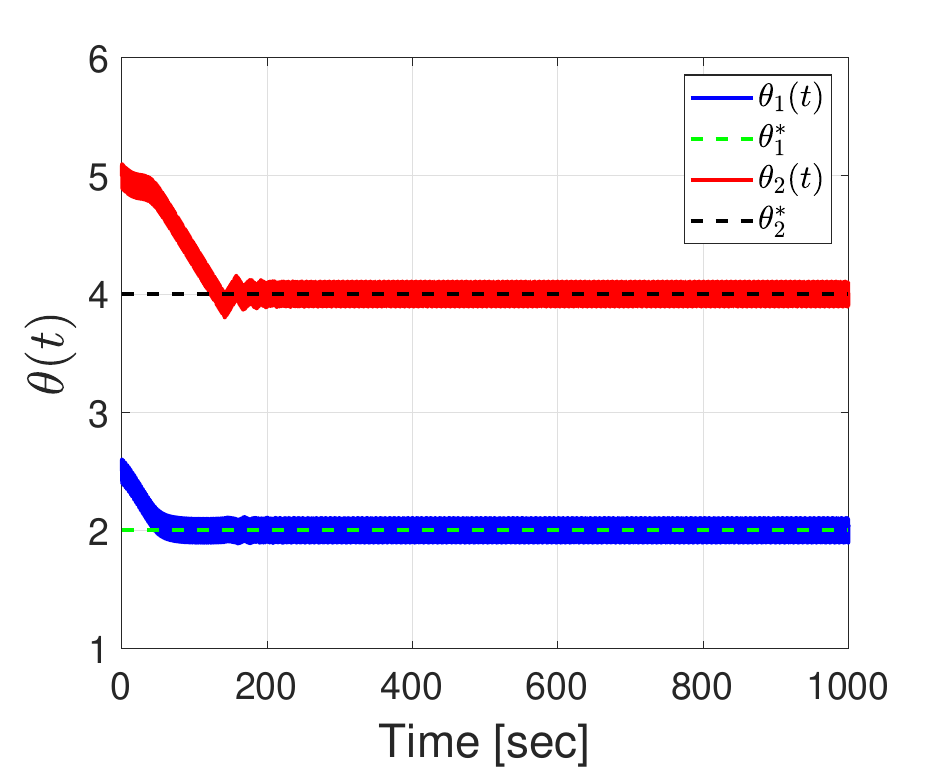}}
    \subfigure[Inverse Hessian $\Gamma^{-1}(t)$] 
    {\includegraphics[width=6.5cm]{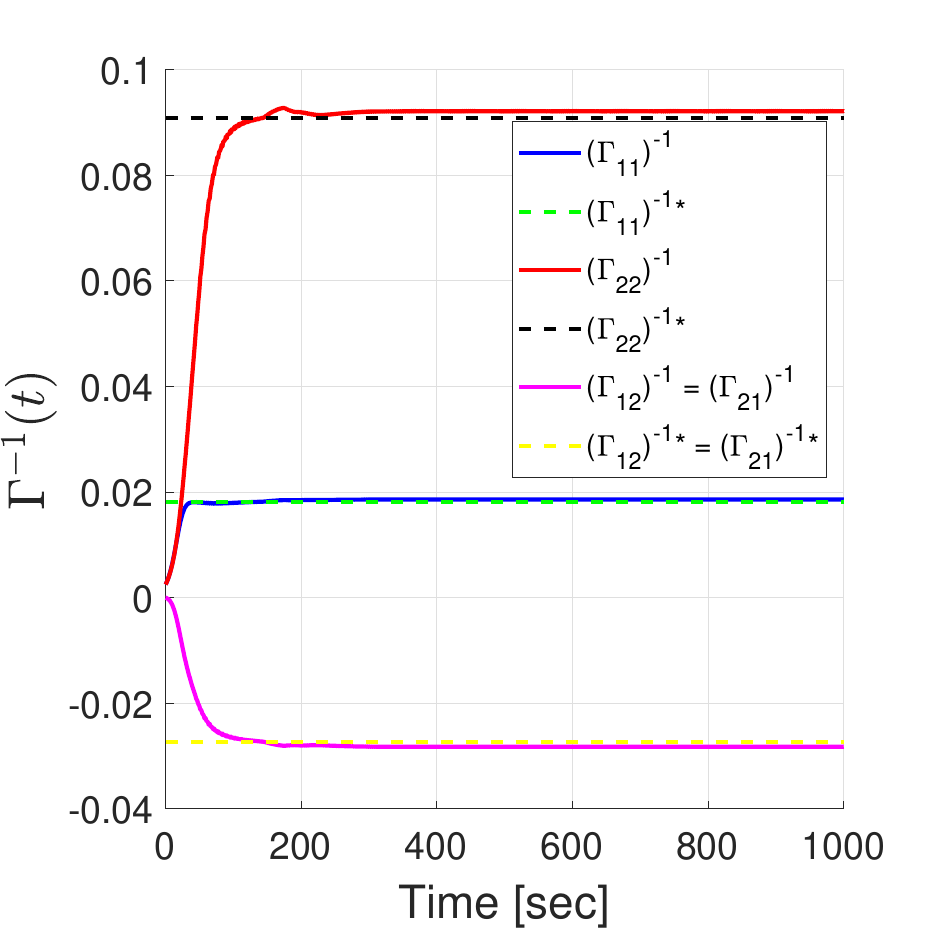}} \label{fig:inv_hessian}   
    \caption{The simulation variables are: (a) control signal $u(t)$, (b) output signal $y(t)$, (c) gradient estimate $\hat{G}(t)$, (d) input signal $\theta(t)$, and (e) estimate of the inverse of the Hessian $\Gamma^{-1}(t)$. }
    \label{fig:uvc_newton}
\end{figure*}

\newpage
\section{Conclusions}

This paper has presented a novel approach to attain locally stable convergence to the extremum point of multi-input quadratic maps by integrating sliding mode elements to extremum seeking based on sinusoidal periodic perturbations.
A comparative analysis between Gradient-based and Newton-based extremum seeking control implementations, both incorporating unit vector control, is completely derived. In both approaches, a discontinuous control law is applied, providing robustness and enforcing finite-time convergence of the system trajectories. Indeed, finite-time stability for the average closed-loop system is proved rather than exponential stability, generally arrived at classical extremum seeking with proportional control laws. 

Simulation results illustrate that the Gradient-based method exhibits a slower convergence rate and a longer transient stage before the sliding mode is established. In contrast, the Newton-based method achieves faster convergence. Although, UVC guarantees finite-time convergence, the dependency of the Gradient-based approach on the unknown Hessian significantly limits its convergence speed. As in the classical-exponential configurations, the Newton-based method overcomes this drawback through a dynamic Riccati filter, which estimates the inverse of the Hessian and enhances performance, especially in the transient phase.

The results presented in this paper offer a sliding mode control alternative to the continuous yet non-smooth equilibrium seeking algorithms with fixed-time convergence proposed in \cite{PK:2021,PKB:2023}. Our approach employs unit-vector functions, applied to estimates of the gradient, and leverages Plotnikov's averaging theorem for differential inclusions \cite{P:1979} to analyze the resulting discontinuous dynamics. This yields finite-time convergence, with a settling time that depends on the initial conditions. While the fixed-time convergence in \cite{PK:2021,PKB:2023} may appear stronger---since it ensures convergence within a time bound independent of the initial state---our method provides a complementary perspective based on sliding mode control for extremum seeking.

\textcolor{black}{
Future investigation lies in the expansion of proposed design and analysis for different control problems with unknown control direction (or unknown Hessian signs) and pursuing infinite-dimensional multi-agent optimization via Nash equilibrium seeking, as considered in references \cite{Oliveira_controldirection,TAC_DO:2024,TRO_book2022,ORKT:2021a}, rather than only scalar or multiparameter real-time optimization without delays and PDEs. Adaptive or integral sliding mode control \cite{Oliveira-Cunha-Hsu-Springer-2017,Rodrigues-Oliveira-IJC-2018,SCP:2019} are also fruitful scenarios for further developments with respect to unit vector control.}






\begin{thebibliography}{99}

\bibitem{Aminde_SMC1}
N.~O. Aminde, T.~R. Oliveira, and L.~Hsu, Global output-feedback extremum
  seeking control via monitoring functions, in \emph{52nd IEEE Conference on
  Decision and Control (CDC)}, pp. 1031--1036, 2013.

\bibitem{Aminde_SMC2}
N.~O. Aminde, T.~R. Oliveira, and L.~Hsu, Multivariable extremum seeking control via cyclic search and
  monitoring function, \emph{International Journal of Adaptive Control and
  Signal Processing}, vol.~35, pp. 1217--1232, 2021.
%
\textcolor{black}{
\bibitem{NOH:2024a}      
N. O. Aminde, T. R. Oliveira and L. Hsu, Global Output-Feedback Extremum Seeking Control with Source Seeking Experiments,\em{ International Journal of Robust and Nonlinear Control, vol. 35, pp. 5156--5171}, 2025.}
%
\bibitem{A:1957}
T.~Apostol, 
\newblock {\em Mathematical Analisys - A Modern Approach to Advanced Calculus}.
\newblock Addison-Wesley Publishing Company, 1957.  

\bibitem{AK:2003}
K.~B.~Ariyur and M.~Krsti{\' c}.
\newblock {\em Real-Time Optimization by Extremum-Seeking Control}.
\newblock Wiley, Canada, 2003.  

\bibitem{B:1993}
S. V. Baida, Unit sliding mode control in continuous and discrete-time systems, {\em International Journal of Control}, 57:5, 1125-1132, 1993.
%
\textcolor{black}{
\bibitem{M:2016}
M. Benosman,  Multi-parametric extremum seeking-based iterative feedback gains tuning for nonlinear control, \em{International Journal of Robust Nonlinear Control},  vol. 26, pp. 4035--4055, 2016.}
%
\textcolor{black}{
\bibitem{TAC_DO:2024}
A. Dibo and T. R. Oliveira,  Extremum seeking feedback under unknown {Hessian} signs, \em{IEEE Transactions on Automatic Control},  vol. 69, pp. 2383--2390, 2024.}
%
\bibitem{GKN:2012}
A.~Ghaffari, M.~Krsti{\'c}, and D.~Ne{\u s}ic, 
\newblock Multivariable {N}ewton-based extremum seeking.
\newblock {\em Automatica}, 48:1759--1767, 2012.

\bibitem{GKS:2014}
A. Ghaffari, M. Krstić and S. Seshagiri, Power Optimization for Photovoltaic Microconverters Using Multivariable Newton-Based Extremum Seeking, {\em IEEE Transactions on Control Systems Technology}, vol. 22, no. 6, pp. 2141-2149, Nov. 2014.

\bibitem{GO:2024}
A. Ghaffari, T. R. Oliveira, Second-Order Newton-Based Extremum Seeking for Multivariable Static Maps, {\em arXiv preprint arXiv:2404.01103}, 2024.
%
\textcolor{black}{
\bibitem{GMD:2014}
M. Guay, E. Moshksar, and D. Dochain, A constrained extremum-seeking control approach, \em{International Journal of Robust and Nonlinear Control}, vol. 25, pp. 3132--3153, 2014.}
%
\bibitem{GUO:2025}
H. Guo, J. Lin, K. Hong, G. Xu, Y. Sun, M. Su, Y. Liu, Multivariable Newton-Based Extremum Seeking Control for 13.56 MHz RF Impedance Matcher, {\em IEEE Transactions on Industrial Electronics}, 2025.

\bibitem{K:2002}
H.~K.~Khalil, 
\newblock {\em Nonlinear Systems}.
\newblock Prentice Hall, Upper Saddle River, New Jersey, 2002.

\bibitem{KW:2000}
M.~Krsti{\' c} and H.-H. Wang, Stability of extremum seeking feedback for
  general nonlinear dynamic systems, \emph{Automatica}, vol.~36, pp.
  595--601, 2000.

\bibitem{MMB:2010}
W. H. Moase, C. Manzie and M. J. Brear, Newton-like extremum-seeking for the control of thermoacoustic instability. {\em IEEE Transactions on Automatic Control}, 55(9), 2094–2105, 2010.  

\bibitem{M:2011}
J.~A. Moreno, Lyapunov approach for analysis and design of second order
  sliding mode algorithms, in \emph{Sliding Modes after the First Decade of
  the 21st Century}, ser. Lecture Notes in Control and Information Sciences,
  L.~Fridman, J.~Moreno, and R.~Iriarte, Eds.\hskip 1em plus 0.5em minus
  0.4em\relax Berlin, Heidelberg: Springer, 2011, vol. 412, pp. 113--149.
  [Online]. Available: \url{https://doi.org/10.1007/978-3-642-22164-4_4}

\bibitem{MO:2012}
J.~A. Moreno and M.~Osorio, Strict lyapunov functions for the super-twisting
  algorithm, \emph{IEEE Transactions on Automatic Control}, vol.~57, no.~4,
  pp. 1035--1040, 2012.

 \bibitem{TRO:2019}
T. R. Oliveira, L. R. Costa, A. V. Pino and P. Paz, Extremum Seeking-based Adaptive {PID} Control applied to Neuromuscular Electrical Stimulation, {\em Annals of the Brazilian Academy of Sciences}, 2019.
%
\textcolor{black}{
\bibitem{Oliveira-Cunha-Hsu-Springer-2017}
T.~R. Oliveira, J.~P.~V.~S. Cunha and L. Hsu,   
\newblock Adaptive sliding mode control based on the extended equivalent control concept for disturbances with unknown bounds,
{\em Advances in Variable Structure Systems and Sliding Mode Control---Theory and Applications}, Springer International Publishing, pp. 149--163, 2017.}
%
\bibitem{OFH:2023}
T. R. Oliveira, L. Fridman, and L. Hsu, {\em Sliding-Mode Control and Variable-Structure Systems: The State of the Art}, Springer, 2023.

\bibitem{Oliveira_SMC1}
T.~R. Oliveira, L.~Hsu, and A.~J. Peixoto, Output-feedback global tracking
  for unknown control direction plants with application to extremum-seeking
  control, \emph{Automatica}, vol.~47, pp. 2029--2038, 2011.
%
\textcolor{black}{
\bibitem{TRO_book2022}
T.R. Oliveira and M. Krsti{\' c}, Extremum Seeking through Delays and PDEs,
\em{Society for Industrial and Applied Mathematics}, 2022.}
%
\bibitem{OKT:2017}
T. R. Oliveira, M. Krstić and D. Tsubakino, Extremum Seeking for Static Maps With Delays, {\em IEEE Transactions on Automatic Control}, vol. 62, no. 4, pp. 1911-1926, April 2017.  
%
\bibitem{Oliveira_SMC2}
T.~R. Oliveira, A.~J. Peixoto, and L.~Hsu, Global real-time optimization by
  output-feedback extremum-seeking control with sliding modes, \emph{Journal
  of the Franklin Institute}, vol. 349, pp. 1397--1415, 2012.
%
\textcolor{black}{
\bibitem{Oliveira_controldirection}
T. R. Oliveira,  A. J. Peixoto, E. V. L. Nunes and L. Hsu, 
\newblock Control of uncertain nonlinear systems with arbitrary relative degree and unknown control direction using sliding modes, {\em Int. J. Adapt. Control Signal Process.} vol. 21, pp.~692--707, 2007.}
%
\textcolor{black}{
\bibitem{ORKT:2021a}
T. R. Oliveira, V. H. P. Rodrigues,  M. Krsti{\' c},  T. Basar,  Nash Equilibrium Seeking in Quadratic Noncooperative Games Under Two Delayed Information-Sharing Schemes, \em{ Journal of Optimization Theory and Applications}, vol. 191, pp. 700--735, 2021.}
%
\bibitem{TRoux:2021dec}
T. R. Oliveira, V. H. P. Rodrigues, M. Krsti{\' c} and T. Ba{\c s}ar, Nash Equilibrium Seeking in Heterogeneous Noncooperative Games with Players Acting Through Heat PDE Dynamics and Delays, {\em 60th IEEE Conference on Decision and Control (CDC)}, 2021.

\bibitem{Ozguner_SMC}
Y.~Pan, {\"U}.~{\"O}zg{\"u}ner, and T.~Acarman, Stability and performance
  improvement of extremum seeking control with sliding mode, 
  \emph{International Journal of Control}, vol.~76, pp. 968--985, 2003.

\bibitem{TRO:2020}
P. Paz, T. R. Oliveira, A. V, Pino and A. P. Fontana, Model-Free Neuromuscular Electrical Stimulation by Stochastic Extremum Seeking, {\em IEEE Transactions on Control Systems Technology}, 2020.

\bibitem{P:1979}
V.~A.~Plotnikov, 
\newblock Averaging of differential inclusions.
\newblock {\em Ukrainian Mathematical Journal}, 31:454--457, 1980.

\bibitem{PK:2021}
J. Poveda and M. Krstic, Non-smooth extremum seeking with user-prescribed fixed-time convergence, \emph{IEEE Trans. Automat. Contr.},
  vol.~66, pp. 6156--6163, 2021.

\bibitem{PKB:2023}
J.~I. Poveda, M.~Krsti{\' c}, and T.~Ba\c{s}ar, Fixed-time {Nash} equilibrium
  seeking in time-varying networks, \emph{IEEE Trans. Automat. Contr.},
  vol.~68, pp. 1954--1969, 2023.
%
\textcolor{black}{
\bibitem{Rodrigues-Oliveira-IJC-2018}
V.~H.~P. Rodrigues  and  T. R.  Oliveira,   
\newblock Global adaptive {HOSM} differentiators via monitoring functions and hybrid state-norm observers for output feedback,
{\em International Journal of Control} vol. 91, pp.~2060--2072, 2018.}
%
\bibitem{ROHDK:2025}
V. H. P. Rodrigues, T. R. Oliveira, L. Hsu, M. Diagne, and M. Krstic, Event-triggered and periodic event-triggered extremum seeking control, {\em Automatica}, vol. 174, p. 112161, 2025.

\bibitem{ROKB:2024a}
V. H. P. Rodrigues, T. R. Oliveira, M. Krsti{\' c}, and T. Ba{\c s}ar, Nash Equilibrium Seeking for Noncooperative Duopoly Games via Event-triggered Control, {\em arXiv preprint arXiv:2404.07287}, 2024.

\bibitem{ROKB:2024b}
V. H. P. Rodrigues, T. Roux Oliveira, M. Krsti{\' c} and T. Ba{\c s}ar, Sliding-Mode Nash Equilibrium Seeking for a Quadratic Duopoly Game, 2024 17th International Workshop on Variable Structure Systems (VSS), Abu Dhabi, United Arab Emirates, 2024, pp. 99-106, doi: 10.1109/VSS61690.2024.10753

\bibitem{ROKT:2025}
V. H. P. Rodrigues, T. R. Oliveira, M. Krsti{\' c}, and P. Tabuada, Event-Triggered Newton-Based Extremum Seeking Control, {\em arXiv preprint arXiv:2502.00930}, 2025.

\bibitem{K:2014}
M.~Krsti{\'c}.
\newblock Extremum seeking control, 
\newblock In J.~Baillieul and T.~Samad, editors, {\em Encyclopedia of Systems and Control}, volume~1, pages 413--416. Springer, London, 1 edition, 2014.
%
\textcolor{black}{
\bibitem{SCP:2019}
C. U. Solis, J. B. Clempner and A. S. Poznyak, Extremum seeking by a dynamic plant using mixed integral sliding mode controller with synchronous detection gradient estimation, \em{International Journal of Robust and Nonlinear Control}, vol. 29, pp. 702--714, 2019.}
%
\bibitem{S:2008}
J.~M.~Steele,  
\newblock {\em The Cauchy-Schwartz Master Class: an introduction to the art of mathematical inequalities}.
\newblock Cambridge University Press, New York, 2008.
%
\textcolor{black}{
\bibitem{SK:2024}
R. Suttner and M.~Krsti{\' c}, Attitude optimization by extremum seeking for rigid bodies actuated by internal rotors only, \em{International Journal of Robust Nonlinear Control}, vol. 34, pp. 2384--2404, 2024.}
%
\bibitem{TK:2023}
V.~Todorovski and M.~Krsti{\' c}, Practical prescribed-time seeking of a
  repulsive source by unicycle angular velocity tuning, \emph{Automatica},
  vol.~68, p. 111069, 2023.

\bibitem{YK:2022}
C.~T. Yilmaz and M.~Krsti{\' c}, Prescribed-time extremum seeking with chirpy
  probing for {PDEs--Part I: Delay}, \emph{IEEE American Control Conference},
  pp. 1000--1005, 2022.

\bibitem{zhang2012extremum}
C. Zhang and R. Ord{\'o}{\~n}ez, {\em Extremum-Seeking Control and Applications: A Numerical Optimization-Based Approach}, Springer, New York, NY, 2012.

\end{thebibliography}


\end{document}